\definecolor{cobalt}{rgb}{0.0, 0.28, 0.67}
\numberwithin{equation}{section}
\newtheorem{theorem}{Theorem}[section]
\theoremstyle{plain}
\newtheorem{lemma}[theorem]{Lemma}
\theoremstyle{plain}
\newtheorem{proposition}[theorem]{Proposition}
\theoremstyle{plain}
\newtheorem{definition}[theorem]{Definition}
\theoremstyle{definition}
\newtheorem{remark}[theorem]{Remark}
\newtheorem{example}[theorem]{Example}
\newcommand{\N}{{\mathbb N}}
\newcommand{\R}{{\mathbb R}}
\newcommand{\eps}{\varepsilon}
\newcommand{\beq}{\begin{equation}}
\newcommand{\eeq}{\end{equation}}
\renewcommand{\le}{\leqslant}
\renewcommand{\ge}{\geqslant}
\newcommand{\w}{W^{s,p}_0(\Omega)}
\newcommand{\fpl}{(-\Delta)_p^s\,}
\newcommand{\ds}{{\rm d}_\Omega^s}
\newcommand{\cs}{C_s^0(\overline\Omega)}
\newcommand{\lenomode}{\tagsleft@true}
\newcommand{\reqnomode}{\tagsleft@false}
\newenvironment{enumroman}{\begin{enumerate}

}{\end{enumerate}}
\title[Fractional $p$-Laplacian with parametric reaction]{Bifurcation-type results for the fractional $p$-Laplacian\\ with parametric nonlinear reaction}
\author[S.\ Frassu, A.\ Iannizzotto]{Silvia Frassu, Antonio Iannizzotto}
\address[S.\ Frassu, A.\ Iannizzotto]{Department of Mathematics and Computer Science
\newline\indent
University of Cagliari
\newline\indent
Via Ospedale 72, 09124 Cagliari, Italy}
\email{silvia.frassu@unica.it, antonio.iannizzotto@unica.it}
\subjclass[2010]{35A15, 35R11, 35B09.}
\keywords{Fractional $p$-Laplacian, Bifurcation, Critical point theory.}
\begin{document}

\begin{abstract}
We study a Dirichlet problem driven by the degenerate fractional $p$-Laplacian and involving a nonlinear reaction, which depends on a positive parameter. The reaction is assumed to be $(p-1)$-sublinear near the origin and $(p-1)$-superlinear at infinity (including the concave-convex case). Following a variational approach based on a combination of critical point theory and suitable truncation techniques, we prove a bifurcation-type result for the existence of positive solutions.
\end{abstract}

\maketitle

\begin{center}
Version of \today\
\end{center}

\section{Introduction and main result}\label{sec1}

\noindent In this paper, we deal with the following Dirichlet problem for a nonlinear, nonlocal equation:
\[(P_\lambda) \ \ \ \ \ \ \ \begin{cases}
\fpl u = f(x,u,\lambda)& \text{in $\Omega$} \\
u>0& \text{in $\Omega$} \\
u=0 & \text{in $\Omega^c$.}
\end{cases}\]
Here $\Omega\subset\R^N$ ($N\ge 2$) is a bounded domain with $C^{1,1}$ boundary, $p \ge 2$, $s\in(0,1)$ s.t.\ $N>ps$, and the leading operator is the degenerate fractional $p$-Laplacian, defined for all $u:\R^N\to\R$ smooth enough and all $x\in\R^N$ by
\[\fpl u(x)=2\lim_{\eps\to 0^+}\int_{B_\eps^c(x)}\frac{|u(x)-u(y)|^{p-2}(u(x)-u(y))}{|x-y|^{N+ps}}\,dy\]
(which in the linear case $p=2$ reduces to the fractional Laplacian, up to a dimensional constant). The reaction $f:\Omega\times\R\times\R_0^+\to\R$ is a Carath\'eodory mapping, subject to a subcritical growth condition on the real variable, and depending in a general manner on a parameter $\lambda>0$. Our hypotheses on the reaction include a $(p-1)$-sublinear behavior near the origin and a $(p-1)$-superlinear one at infinity, along with a quasi-monotonicity condition and several conditions on the $\lambda$-dependance.
\vskip2pt
\noindent
Under such assumptions, we prove a bifurcation-type result for problem $(P_\lambda)$, namely, our problem admits at least two positive solutions for $\lambda$ below a certain threshold $\lambda^*>0$, at least one solution for $\lambda=\lambda^*$, and no solution for $\lambda>\lambda^*$. In addition, we study the behavior of solutions as $\lambda\to\lambda^*$.
\vskip2pt
\noindent
Our reaction embraces the model case of the concave-convex reaction introduced in \cite{ABC}, i.e., the following pure power map with exponents $1<q<p<r$:
\[t\mapsto \lambda t^{q-1}+t^{r-1} \ (t>0).\]
Nonlocal elliptic equations driven by the fractional $p$-Laplacian with concave-convex reactions are investigated, for instance, in \cite{BM,CM,DHS,K,LL1}. Other existence and bifurcation results for problems with several parametric reactions can be found in \cite{IMP,PSY,XZR,ZYY}. These are indeed only a few recent references out of a vast and increasing literature on fractional $p$-Laplacian equations, motivated by both intrinsic mathematicand interest and applications in game theory and nonlinear Dirichlet-to-Neumann operators, see \cite{BCF,W}.
\vskip2pt
\noindent
Here we try to keep the $\lambda$-dependence as general as possible, assuming at the same time several conditions on the behavior of $f(\cdot,\cdot,\lambda)$. The main novelty of the present work, in  the framework of nonlocal equations, is that we consider general parametric reactions rather than focusing on pure power type maps. Also, with respect to previous results, we gain new monotonicity and convergence properties of the solutions with respect to $\lambda$.
\vskip2pt
\noindent
Our approach is entirely variational, based on critical point theory and suitable truncation techniques, and follows mainly \cite{IP}. In particular, we shall often use two recent results on equivalence between Sobolev and H\"older minima of the energy functional from \cite{IMS1}, and on strong maximum and comparison principles from \cite{IMP}. This will allow us to establish a general sub-supersolution principle for problem $(P_\lambda)$ and to slightly relax the assumptions on the mapping $\lambda\mapsto f(x,t,\lambda)$ with respect to \cite{IP}. Also, in the proof of the nonexistence result we will employ a recent anti-maximum principle proved in \cite{FI1}.
\vskip2pt
\noindent
Our precise hypotheses on the reaction $f$ are the following:
\begin{itemize}[leftmargin=1cm]
\item[${\bf H}$] $f:\Omega\times\R\times\R^+_0\to\R$ is a Carath\'eodory map s.t.\ $f(x,0,\lambda)=0$ for a.e.\ $x\in\Omega$ and all $\lambda>0$, and for all $(x,t,\lambda)\in\Omega\times\R\times\R^+_0$ we set
\[F(x,t,\lambda) = \int_0^t f(x,\tau,\lambda)\,d\tau.\]
Also, the following conditions hold:
\begin{enumroman}
\item\label{h1} there exist $c_1>0$, $r\in(p,p^*_s)$, and for all $\lambda>0$ a function $a_\lambda\in L^\infty(\Omega)_+$ s.t.\ $\lambda\mapsto\|a_\lambda\|_\infty$ is locally bounded, $\|a_\lambda\|_\infty\to 0$ as $\lambda\to 0$, and for a.e.\ $x\in\Omega$ and all $t\ge 0$, $\lambda>0$
\[|f(x,t,\lambda)| \le a_\lambda(x)+c_1t^{r-1};\]
\item\label{h2} for all $\lambda>0$, uniformly for a.e.\ $x\in\Omega$
\[\lim_{t\to\infty}\frac{f(x,t,\lambda)}{t^{p-1}} = \infty;\]
\item\label{h3} there exist $\rho\in (\frac{N}{ps}(r-p),\,p^*_s)$, and for all $\Lambda>0$ a number $\theta>0$ s.t.\ for all $\lambda\in(0,\Lambda]$, uniformly for a.e.\ $x\in\Omega$
\[\liminf_{t\to\infty}\frac{f(x,t,\lambda)t-pF(x,t,\lambda)}{t^\rho} > \theta;\]
\item\label{h4} for all $\Lambda>0$ there exist $c_2,\delta>0$, $q\in(1,p)$ s.t.\ for a.e.\ $x\in\Omega$ and all $t\in[0,\delta]$, $\lambda\ge\Lambda$
\[f(x,t,\lambda) \ge c_2t^{q-1};\]
\item\label{h5} for all $T,\Lambda>0$ there exists $\sigma>0$ s.t.\ for a.e.\ $x\in\Omega$ and all $\lambda\in(0,\Lambda]$ the map $t \mapsto f(x,t,\lambda)+\sigma t^{p-1}$ is nondecreasing in $[0,T]$;
\item\label{h6} for a.e.\ $x\in\Omega$ and all $t>0$ the map $\lambda \mapsto f(x,t,\lambda)$ is increasing in $\R^+_0$;
\item\label{h7} for all $0<T_1<T_2$, uniformly for a.e.\ $x\in\Omega$ and all $t\in[T_1,T_2]$
\[\lim_{\lambda\to\infty} f(x,t,\lambda) = \infty.\]
\end{enumroman}
\end{itemize}
Hypothesis \ref{h1} is a subcritical growth condition. Hypotheses \ref{h2}, \ref{h3} govern the behavior of $f(x,\cdot,\lambda)$ at infinity, which is $(p-1)$-superlinear but tempered by an asymptotic condition of Ambrosetti-Rabinowitz type. By \ref{h4} $f(x,\cdot,\lambda)$ is $(p-1)$-sublinear near the origin, while \ref{h5} is a quasi-monotonicity condition. Finally, hypotheses \ref{h6}, \ref{h7} are related to the $\lambda$-dependence of the reaction. For some examples of functions satisfying ${\bf H}$, see the end of Section \ref{sec3}.
\vskip2pt
\noindent
Under hypotheses ${\bf H}$ we prove the following bifurcation-type result:

\begin{theorem}\label{bif}
Let ${\bf H}$ hold. Then, there exists $\lambda^*>0$ s.t.\
\begin{enumroman}
\item\label{bif1} for all $\lambda\in(0,\lambda^*)$ $(P_\lambda)$ has at least two solutions $0<u_\lambda<v_\lambda$, s.t.\ $u_\lambda<u_\mu$ for all $0<\lambda<\mu<\lambda^*$;
\item\label{bif2} $(P_{\lambda^*})$ has at least one solution $u^*>0$ s.t.\ $u_\lambda\to u^*$ uniformly in $\Omega$ as $\lambda\to\lambda^*$;
\item\label{bif3} for all $\lambda>\lambda^*$ $(P_\lambda)$ has no solutions.
\end{enumroman}
\end{theorem}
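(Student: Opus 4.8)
The plan is to follow the classical strategy for concave–convex bifurcation (as in \cite{IP}): introduce the set of admissible parameters
\[
\mathcal{L} = \{\lambda > 0 : (P_\lambda) \text{ has a solution}\},
\]
show it is a nonempty, bounded, downward-closed interval, and set $\lambda^* = \sup\mathcal{L}$. Throughout I would work with the energy functional
\[
\Phi_\lambda(u) = \tfrac{1}{p}[u]_{s,p}^p - \int_\Omega F(x,u^+,\lambda)\,dx, \qquad u \in \w,
\]
and with reactions truncated so that solutions are automatically positive and bounded, combining this with the equivalence of Sobolev and H\"older minimizers from \cite{IMS1} and the comparison and (anti-)maximum principles from \cite{IMP,FI1}. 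Part \ref{bif3} is then immediate from the definition of $\lambda^*$ once $\mathcal{L}$ is shown to be a bounded interval.

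First I would prove $\mathcal{L}\supset(0,\lambda_0)$ for some $\lambda_0>0$: by \ref{h1} the term $a_\lambda$ is small for small $\lambda$, so the truncated functional has a strict local minimizer near the origin (exploiting the $(p-1)$-sublinearity \ref{h4}), yielding a first positive solution. Next, that $\mathcal{L}$ is downward closed: if $\mu\in\mathcal{L}$ with solution $u_\mu$, then by the monotonicity \ref{h6} the function $u_\mu$ is a supersolution for $(P_\lambda)$ whenever $\lambda<\mu$, and a small subsolution built again from \ref{h4} lies below it, so the sub-supersolution principle produces a solution $u_\lambda\le u_\mu$. Finally, that $\mathcal{L}$ is bounded above: by \ref{h7} the reaction becomes arbitrarily large on compact $t$-intervals as $\lambda\to\infty$, and testing the equation against the first eigenfunction of $\fpl$, together with the anti-maximum principle of \cite{FI1}, rules out existence for large $\lambda$. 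Hence $\lambda^*\in(0,\infty)$.

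For part \ref{bif1}, fix $\lambda\in(0,\lambda^*)$ and pick $\mu\in(\lambda,\lambda^*)\cap\mathcal{L}$; by \ref{h6} the first solution $u_\mu$ at level $\mu$ is a supersolution for $(P_\lambda)$. Truncating $\Phi_\lambda$ into the order interval $[0,u_\mu]$, I would obtain the first solution $u_\lambda$ as a global minimizer of the truncated functional, with $0<u_\lambda\le u_\mu$; the strong comparison principle of \cite{IMP} then sharpens this to the strict ordering $u_\lambda<u_\mu$, which is exactly the claimed monotonicity in $\lambda$. To produce the second solution $v_\lambda>u_\lambda$ I would first show that $u_\lambda$ is a \emph{local} minimizer of the full, untruncated $\Phi_\lambda$: here the Sobolev-versus-H\"older minimizer equivalence of \cite{IMS1} is essential, as it lets me pass from a $\cs$-local minimizer to a $\w$-local one. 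Then I run the mountain pass theorem: the superlinearity \ref{h2} gives the geometry, while the Ambrosetti--Rabinowitz-type condition \ref{h3} yields boundedness of Palais--Smale sequences and, via the compact embedding $\w\hookrightarrow L^r(\Omega)$, the Palais--Smale condition; the resulting critical point $v_\lambda$ is distinct from $u_\lambda$ by the usual minimax characterization.

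Finally, for \ref{bif2} I would take an increasing sequence $\lambda_n\uparrow\lambda^*$; by the monotonicity already proved the first solutions $u_{\lambda_n}$ increase. Since each is a global minimizer, $\Phi_{\lambda_n}(u_{\lambda_n})\le\Phi_{\lambda_n}(0)=0$, and combining this with the relation $\langle\Phi_{\lambda_n}'(u_{\lambda_n}),u_{\lambda_n}\rangle=0$ and the AR-type estimate \ref{h3} yields a uniform bound on $[u_{\lambda_n}]_{s,p}$. Passing to the weak limit, justified by the compact embedding and the local boundedness of $\lambda\mapsto\|a_\lambda\|_\infty$ in \ref{h1}, shows the limit $u^*$ solves $(P_{\lambda^*})$, and the global $L^\infty$ and $C_s^0$ a priori bounds from the regularity theory promote the monotone convergence $u_\lambda\to u^*$ to uniform convergence on $\overline\Omega$; positivity of $u^*$ follows from the strong maximum principle once $u^*\not\equiv 0$ is checked via \ref{h4}. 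The main obstacle I anticipate is precisely this compactness at $\lambda=\lambda^*$: one must secure a uniform a priori bound on $\{u_{\lambda_n}\}$ surviving both the degeneracy of $\fpl$ and the limit $\lambda_n\to\lambda^*$, for which the AR-type condition \ref{h3} (with the sharp lower exponent $\rho>\tfrac{N}{ps}(r-p)$) together with the regularity results of \cite{IMS1} is the decisive ingredient.
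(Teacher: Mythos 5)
Your overall architecture coincides with the paper's: the same threshold $\lambda^*=\sup\mathcal{L}$, a sub-supersolution principle built on truncation and the strong maximum principle, the anti-maximum principle of \cite{FI1} for the upper bound on $\mathcal{L}$, the Sobolev-versus-H\"older equivalence of \cite{IMS1} to promote the first solution to a $\w$-local minimizer, a mountain pass for the second solution, and the non-standard AR-condition \ref{h3} combined with interpolation ($\|u_n\|_r\le C\|u_n\|^{\tau}$ with $\tau r<p$) for compactness at $\lambda^*$. Two of your choices are legitimate variants rather than errors: you obtain existence for small $\lambda$ by minimizing on a small ball (the classical concave--convex geometry, using the smallness of $\|a_\lambda\|_\infty$), whereas the paper builds an explicit supersolution $\xi_\lambda w$ from the torsion function $\fpl w=1$ and feeds it to the sub-supersolution lemma; and you use $u_\mu$ (for $\mu\in(\lambda,\lambda^*)$) as the dominating supersolution where the paper, having already proved existence at $\lambda^*$, uses $u^*$. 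Both work, since in either case the strong comparison principle gives an ordered pair whose order interval contains a $\cs$-neighborhood of $u_\lambda$.

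The genuine gap is in the production of the second solution. You propose to run the mountain pass theorem on the \emph{full, untruncated} functional $\Phi_\lambda$ and conclude that the resulting critical point $v_\lambda$ is merely \emph{distinct} from $u_\lambda$. But part \ref{bif1} of the theorem asserts the ordering $u_\lambda<v_\lambda$, and a mountain pass critical point of $\Phi_\lambda$ need not dominate $u_\lambda$ (it is nonnegative, by the zero extension of $f$ to $t<0$, but could lie below $u_\lambda$ or be incomparable with it). The paper closes this by replacing $\Phi_\lambda$ with the functional $\hat\Phi_\lambda$ obtained by freezing the reaction at $f(x,u_\lambda(x),\lambda)$ for $t\le u_\lambda(x)$: every critical point of $\hat\Phi_\lambda$ satisfies $v\ge u_\lambda$ by testing with $(u_\lambda-v)^+$ and strict $(T)$-monotonicity, the mountain pass geometry and the Cerami condition survive because $\hat f_\lambda$ agrees with $f(\cdot,\cdot,\lambda)$ for large $t$, and the strong comparison principle (via \ref{h5}) then upgrades $v_\lambda\ge u_\lambda$, $v_\lambda\neq u_\lambda$ to $v_\lambda-u_\lambda\in{\rm int}(\cs_+)$. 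A second, smaller omission in the same step: after showing that $u_\lambda$ is a local minimizer you must handle the alternative that it is \emph{not strict} (in which case there is a whole sequence of nearby critical points of the truncated functional, each of which must again be shown to dominate $u_\lambda$); the paper treats this dichotomy explicitly before invoking the mountain pass.
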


\noindent
See Section \ref{sec2} below for a proper definition of solution. Note that our result is new even in the semilinear case $p=2$ (fractional Laplacian). Also, note that we have no precise information on the behavior of the greater solution $v_\lambda$ as $\lambda\to\lambda^*$ (this is why Theorem \ref{bif} is not literally a bifurcation result).
\vskip4pt
\noindent
{\bf Notation:} Throughout the paper, for any $A\subset\R^N$ we shall set $A^c=\R^N\setminus A$. For any two measurable functions $u,v:\Omega\to\R$, $u\le v$ in $\Omega$ will mean that $u(x)\le v(x)$ for a.e.\ $x\in\Omega$ (and similar expressions). The positive (resp., negative) part of $u$ is denoted $u^+$ (resp., $u^-$). If $X$ is an ordered Banach space, then $X_+$ will denote its non-negative order cone. For all $r\in[1,\infty]$, $\|\cdot\|_r$ denotes the standard norm of $L^r(\Omega)$ (or $L^r(\R^N)$, which will be clear from the context). Every function $u$ defined in $\Omega$ will be identified with its $0$-extension to $\R^N$. Moreover, $C$ will denote a positive constant (whose value may change case by case).

\section{Preliminaries}\label{sec2}

\noindent
In this section we recall some basic theory on the Dirichlet problem for fractional $p$-Laplacian equation. We shall focus on such results which are most needed in our study and focus on simpler (if not most general) statements. We refer to \cite{ILPS} for a general introduction to variational methods for such problem, and to \cite{P} for a detailed account on related regularity theory.
\vskip2pt
\noindent
For all measurable $u:\Omega\to\R$, $s\in(0,1)$, $p>1$ we denote
\[[u]_{s,p,\Omega} = \Big[\iint_{\Omega\times\Omega}\frac{|u(x)-u(y)|^p}{|x-y|^{N+ps}}\,dx\,dy\Big]^\frac{1}{p}.\]
Accordingly we define the fractional Sobolev space
\[W^{s,p}(\Omega) = \big\{u\in L^p(\Omega):\,[u]_{s,p,\Omega}<\infty\big\}.\]
If $\Omega\subset\R^N$ is a bounded domain with $C^{1,1}$-boundary we also define
\[\w = \big\{u\in W^{s,p}(\R^N):\,u=0 \ \text{in $\Omega^c$}\big\},\]
a uniformly convex, separable Banach space with norm $\|u\|=[u]_{s,p,\R^N}$ and dual space $W^{-s,p'}(\Omega)$. Assume now that $ps<N$ and set
\[p^*_s = \frac{Np}{N-ps},\]
then the embedding $\w\hookrightarrow L^q(\Omega)$ is continuous for all $q\in[1,p^*_s]$ and compact for all $q\in[1,p^*_s)$ (see \cite{DPV} for a quick introduction to fractional Sobolev spaces). We can now extend the definition of the fractional $p$-Laplacian (of order $s$) by setting for all $u,\varphi\in\w$
\[\langle\fpl u,\varphi\rangle = \iint_{\R^N \times \R^N} \frac{|u(x)-u(y)|^{p-2} (u(x)-u(y)) (\varphi(x)-\varphi(y))}{|x-y|^{N+ps}}\,dx\,dy.\]
Such definition is equivalent to the one given in Section \ref{sec1}, provided $u$ is smooth enough, for instance if $u\in C^\infty_c(\Omega)$. More generally, we have defined $\fpl:\w\to W^{-s,p'}(\Omega)$ as a continuous, maximal monotone operator of $(S)_+$-type, i.e., whenever $u_n\rightharpoonup u$ in $\w$ and
\[\limsup_n \langle\fpl u_n,u_n-u\rangle \le 0,\]
then we have $u_n\to u$ (strongly) in $\w$. Also, the map $\fpl$ is strictly $(T)$-monotone, i.e., for all $u,v\in\w$ s.t.\
\[\langle \fpl u - \fpl v, (u-v)^+\rangle \le 0,\]
we have $u\le v$ in $\Omega$. Finally we recall that for all $u\in\w$
\[\|u^\pm\|^p \le \langle\fpl u,\pm u^\pm\rangle.\]
All these properties are proved (in a slightly more general form) in \cite{FI}, though some go back to previous works.
\vskip2pt
\noindent
The general Dirichlet problem for the fractional $p$-Laplacian is stated as follows:
\beq\label{dir}
\begin{cases}
\fpl u = f_0(x,u) & \text{in $\Omega$} \\
u = 0 & \text{in $\Omega^c$.}
\end{cases}
\eeq
The reaction $f_0$ is subject to the following basic hypotheses:
\begin{itemize}[leftmargin=1cm]
\item[${\bf H}_0$] $f_0:\Omega\times\R\to\R$ is a Carath\'eodory map and there exist $c_0>0$, $r\in(1,p^*_s)$ s.t.\ for a.e.\ $x\in\Omega$ and all $t\in\R$
\[|f_0(x,t)| \le c_0(1+|t|^{r-1}).\]
\end{itemize}
By virtue of ${\bf H}_0$ the following definitions are well posed. We say that $u\in\w$ is a (weak) supersolution of \eqref{dir} if for all $\varphi\in\w_+$
\[\langle\fpl u,\varphi\rangle \ge \int_\Omega f_0(x,u)\varphi\,dx.\]
The definition of a (weak) subsolution is analogous. Once again we remark that these are not the most general definitions of super- and subsolution, as in general one can require $u\ge 0$ or $u\le 0$, respectively, in $\Omega^c$ (see \cite{FI}). Finally, we say that $u\in\w$ is a (weak) solution of \eqref{dir} if it is both a super- and a subsolution, i.e., if for all $\varphi\in\w$
\[\langle\fpl u,\varphi\rangle = \int_\Omega f_0(x,u)\varphi\,dx.\]
For the solutions of \eqref{dir} we have the following a priori bound:

\begin{proposition}\label{apb}
{\rm\cite[Theorem 3.3]{CMS}} Let ${\bf H}_0$ hold, $u \in \w$ be a solution of \eqref{dir}. Then, $u \in L^{\infty}(\Omega)$ with $\|u\|_{\infty} \le C$, for some $C=C(\|u\|)>0$.
\end{proposition}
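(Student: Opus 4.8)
The plan is to prove the bound by a De Giorgi--Stampacchia level-set iteration, treating the positive and negative parts of $u$ separately; the argument for $u^-$ is symmetric, since $-u$ solves an analogous problem with reaction $-f_0(x,-t)$, which still satisfies ${\bf H}_0$ with the same constants. So I would concentrate on producing a bound for $u^+$ expressed in terms of $\|u\|$.

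First I would derive an energy (Caccioppoli-type) estimate. For a level $k\ge 0$ set $w_k=(u-k)^+$ and $A_k=\{x\in\Omega:\,u(x)>k\}$. Since $u=0$ in $\Omega^c$, for $k>0$ the function $w_k$ vanishes outside $\Omega$, so $w_k\in\w_+$ is an admissible test function. Testing the weak formulation with $w_k$ and using the pointwise structural inequality recalled above (adding the constant $-k$ leaves the differences $u(x)-u(y)$ unchanged, so that $\langle\fpl u,w_k\rangle\ge\|w_k\|^p$) yields
\[
\|w_k\|^p \le \int_{A_k} f_0(x,u)\,w_k\,dx.
\]
The only genuinely nonlocal point here is the lower bound on the left-hand side: it follows from the convexity of $t\mapsto t^p$ together with the fact that the long-range interaction between $A_k$ and its complement contributes only a nonnegative term.

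Next I would bound the right-hand side. By ${\bf H}_0$, on $A_k$ where $u=w_k+k>0$,
\[
\int_{A_k} f_0(x,u)\,w_k\,dx \le c_0\int_{A_k}\big(1+u^{r-1}\big)\,w_k\,dx \le C\int_{A_k}\big(1+k^{r-1}+w_k^{r-1}\big)\,w_k\,dx.
\]
Since $r\in(1,p^*_s)$, Hölder's inequality against the level-set measure $|A_k|$, combined with the fractional Sobolev embedding $\|w_k\|_{p^*_s}\le C\|w_k\|$, turns the energy estimate into a recursive inequality for the decreasing quantity $\phi(k)=\int_{A_k}(u-k)\,dx$, of the form $\phi(h)\le C\,(h-k)^{-\gamma}\,\phi(k)^{1+\mu}$ for all $h>k\ge k_0$, with exponents $\gamma,\mu>0$ and constants depending on the data only through $\|u\|_{p^*_s}\le C\|u\|$.

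The conclusion then follows from Stampacchia's iteration lemma: a nonnegative, nonincreasing function obeying such a super-geometric recursion must vanish beyond a finite level $k^*$, and tracking the constants shows $k^*\le C(\|u\|)$, whence $\|u^+\|_\infty\le C(\|u\|)$. I expect the main obstacle to be the first, nonlocal energy step: unlike the local case there is no pointwise chain rule, so the inequality $\langle\fpl u,w_k\rangle\ge\|w_k\|^p$ (and, in an alternative Moser scheme based on the test functions $\varphi=(u^+)^\beta$, the analogous estimate for powers) must be extracted from algebraic inequalities for the difference quotients, with care devoted to the tail terms. The subcriticality $r<p^*_s$ is precisely what makes $\mu>0$ strictly, i.e.\ it supplies the superlinear gain in $\phi$ that drives the iteration to zero, and this is exactly where the growth restriction in ${\bf H}_0$ is essential.
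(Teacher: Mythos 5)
The paper does not prove this proposition: it is quoted verbatim from \cite[Theorem 3.3]{CMS}, so there is no internal proof to compare against. Your De Giorgi--Stampacchia level-set iteration is the standard argument for such bounds and is essentially the route taken in the cited source; the key nonlocal step $\langle\fpl u,(u-k)^+\rangle\ge\|(u-k)^+\|^p$ follows from the pointwise inequality $|a-b|^{p-2}(a-b)(a_k-b_k)\ge|a_k-b_k|^p$ for truncations, exactly as you indicate. The only points deserving explicit care in a full write-up are the absorption of the superlinear term $\|w_k\|_{p^*_s}^r|A_k|^{1-r/p^*_s}$ into the left-hand side (using $\|w_k\|\le\|u\|$ and $|A_k|\to0$ to start the iteration at some $k_0=k_0(\|u\|)$) and the fact that the constant in your recursion for $\phi$ picks up a factor growing like $k^{r-1}$, which is handled by running the iteration on a bounded range of levels $k_j=M(2-2^{-j})$ or by a weighted version of Stampacchia's lemma; neither is a genuine obstruction.
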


\noindent
Regularity of solutions to nonlocal equations is a delicate issue, as such solutions fail in general to be smooth up to the boundary of the domain (no matter how regular it is). Such problem can be overcome by comparing the solutions to a convenient power of the distance from the boundary, namely, set for all $x\in\R^N$
\[\ds(x)= \mathrm{dist}(x, \Omega^c)^s.\]
We define the following weighted H\"older spaces and the respective norms:
\[\cs = \Big\{u \in C^0(\overline\Omega): \frac{u}{\ds} \ \text{has a continuous extension to} \ \overline\Omega \Big\}, \ \|u\|_{0,s} = \Big\|\frac{u}{\ds}\Big\|_\infty,\]
and for all $\alpha\in(0,1)$
\[C_s^{\alpha}(\overline\Omega)= \Big\{u \in C^0(\overline\Omega): \frac{u}{\ds} \ \text{has a $\alpha$-H\"older continuous extension to} \ \overline\Omega\Big\}, \ \|u\|_{\alpha,s} = \Big\|\frac{u}{\ds}\Big\|_{C^\alpha(\overline\Omega)}.\]
The embedding $C_s^{\alpha}(\overline\Omega) \hookrightarrow \cs$ is compact for all $\alpha \in (0,1)$. Also, the positive cone $\cs_+$ of $\cs$ has a nonempty interior given by
\[{\rm int}(\cs_+)= \Big\{u \in \cs:\, \inf_{\Omega}\frac{u}{\ds} > 0\Big\}.\]
Combining Proposition \ref{apb} and \cite[Theorem 1.1]{IMS}, we have the following global regularity result for the degenerate case $p\ge 2$:

\begin{proposition}\label{reg}
Let $p\ge 2$, ${\bf H}_0$ hold, $u \in \w$ be a solution of \eqref{dir}. Then, $u \in C_s^{\alpha}(\overline\Omega)$  for some $\alpha \in (0,s]$, with $\|u\|_{\alpha,s}\le C(\|u\|)$.
\end{proposition}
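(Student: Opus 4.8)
The plan is to reduce Proposition \ref{reg} to the combination of the a priori bound of Proposition \ref{apb} with the global weighted H\"older regularity theorem \cite[Theorem 1.1]{IMS}. The key observation is that, once $u$ is known to be bounded, the reaction $f_0(\cdot,u(\cdot))$ becomes a bounded right-hand side; thus $u$ solves a Dirichlet problem for $\fpl$ with datum in $L^\infty(\Omega)$, to which the cited boundary regularity result applies directly.

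First I would invoke Proposition \ref{apb}: since ${\bf H}_0$ holds and $u\in\w$ solves \eqref{dir}, we obtain $u\in L^\infty(\Omega)$ with $\|u\|_\infty\le C(\|u\|)$. Next, setting $g(x)=f_0(x,u(x))$ for a.e.\ $x\in\Omega$, the growth condition ${\bf H}_0$ together with the previous bound yields
\[|g(x)| \le c_0\big(1+|u(x)|^{r-1}\big) \le c_0\big(1+\|u\|_\infty^{r-1}\big) \quad \text{for a.e.\ } x\in\Omega,\]
so that $g\in L^\infty(\Omega)$ with $\|g\|_\infty\le C(\|u\|)$. In particular, $u$ is a weak solution of the Dirichlet problem $\fpl u=g$ in $\Omega$, $u=0$ in $\Omega^c$, with a bounded right-hand side.

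At this point I would apply \cite[Theorem 1.1]{IMS}, the global regularity result for the degenerate fractional $p$-Laplacian ($p\ge 2$) with $L^\infty$ datum: it provides some $\alpha\in(0,s]$ and a constant, depending only on the structural data $N,s,p,\Omega$ and on $\|g\|_\infty$, such that $u\in C^\alpha_s(\overline\Omega)$ with $\|u\|_{\alpha,s}\le C(\|g\|_\infty)$. Chaining this with the previous estimate gives $\|u\|_{\alpha,s}\le C(\|u\|)$, as claimed.

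The computational content of this argument is routine; its substance lies entirely in the cited theorem, whose proof rests on fine interior and boundary estimates for the nonlocal operator. The only point that demands care is the bookkeeping of the constants: one must ensure that the exponent $\alpha$ and the constant produced by \cite[Theorem 1.1]{IMS} depend on $u$ solely through $\|g\|_\infty$, hence through $\|u\|_\infty$, and therefore---via Proposition \ref{apb}---only through $\|u\|$. I also note that the restriction $p\ge 2$ (the degenerate regime) is essential here, as \cite[Theorem 1.1]{IMS} is stated precisely in that range; the singular case $1<p<2$ would require a different reference.
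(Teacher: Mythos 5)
Your argument is exactly the one the paper intends: it states Proposition \ref{reg} as an immediate consequence of combining the a priori bound of Proposition \ref{apb} with the global weighted H\"older regularity theorem \cite[Theorem 1.1]{IMS}, and your chaining of the constants through $\|g\|_\infty\le C(\|u\|_\infty)\le C(\|u\|)$ is precisely the intended bookkeeping. No discrepancies to report.
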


\noindent
We recall now two recent strong maximum and comparison principles, which will be used in our study:

\begin{proposition}\label{smp}
{\rm\cite[Theorem 2.6]{IMP}} Let $g \in C^0(\R) \cap BV_{\rm loc}(\R)$, $u \in \w \cap C^0(\overline\Omega)\setminus\{0\}$ s.t.\ 
\[\begin{cases}
\fpl u +g(u) \ge g(0) & \text{weakly in $\Omega$} \\
u\ge0 & \text{in $\Omega$.}
\end{cases}\]
Then, 
\[\inf_{\Omega} \frac{u}{\ds}>0.\]
In particular, if $u \in \cs$, then $u \in {\rm int}(\cs_+)$. 
\end{proposition}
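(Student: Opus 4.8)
The plan is to reduce the hypothesis to a supersolution inequality with a sign-definite lower-order term, to prove interior positivity by a nonlocal weak Harnack inequality, and finally to capture the boundary behaviour through a barrier comparison.

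First I would use the regularity of the data. Since $u\in C^0(\overline\Omega)$ it is bounded, say $0\le u\le M:=\|u\|_\infty$, and since $g\in C^0(\R)\cap BV_{\rm loc}(\R)$ we may write $g=\phi-\psi$ on $[0,M]$ with $\phi,\psi$ continuous and nondecreasing (its positive and negative variations), normalized so that $\psi(0)=0$. Setting $\Phi(t):=\phi(t)-\phi(0)$, which is continuous, nondecreasing and vanishes at $0$, we obtain $g(0)-g(t)=-\Phi(t)+\psi(t)\ge-\Phi(t)$ for all $t\in[0,M]$. Testing the weak inequality $\fpl u+g(u)\ge g(0)$ against any $\varphi\in\w_+$ then gives
\[ \langle\fpl u,\varphi\rangle\ge\int_\Omega\big(g(0)-g(u)\big)\varphi\,dx\ge-\int_\Omega\Phi(u)\varphi\,dx, \]
so that $u$ is a weak supersolution of $\fpl u+\Phi(u)=0$ in $\Omega$, with $0$ a solution of the reduced problem. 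In particular $0\le\Phi(u)\le\Phi(M)=:C_0$, whence $\fpl u\ge-C_0$ weakly in $\Omega$.

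Second I would establish interior positivity. As $u\ge0$ is continuous and $u\not\equiv0$, there is a ball $B\Subset\Omega$ on which $u\ge m>0$. Applying the nonlocal weak Harnack inequality for supersolutions with bounded right-hand side on small balls, and chaining it along a finite cover, one propagates the bound $u\ge m$ from $B$ throughout $\Omega$: the tail terms are nonnegative since $u\ge0$ in $\R^N$, and the right-hand side correction, of order $C_0^{1/(p-1)}\rho^{ps/(p-1)}$, is negligible for balls of small radius $\rho$. This yields $\inf_K u>0$ for every compact $K\subset\Omega$. Fixing a thin boundary tube $D:=\{x\in\Omega:\ds(x)<\eta\}$, the set $\Omega\setminus D$ is compact in $\Omega$, so $m_D:=\inf_{\Omega\setminus D}u>0$.

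Finally I would upgrade this to the estimate $\inf_\Omega u/\ds>0$ by a local comparison in $D$. Using the barrier estimates for $\fpl$ near a $C^{1,1}$ boundary (comparing with suitable powers of $\ds$, as in \cite{IMS}), one constructs $v\in\w$, comparable to $\ds$ in $D$, that vanishes in $\Omega^c$, satisfies $\fpl v\le-C_0$ weakly in $D$, and, after scaling by a small constant, obeys $v\le m_D$ on $\Omega\setminus D$. Then $v\le u$ on $\R^N\setminus D$, so $(v-u)^+\in\w_+$ is supported in $\overline D$; subtracting the inequalities $\fpl v\le-C_0$ in $D$ and $\fpl u\ge-C_0$ in $\Omega$, both tested against $(v-u)^+$, gives $\langle\fpl v-\fpl u,(v-u)^+\rangle\le0$, whence $v\le u$ in $\Omega$ by the strict $(T)$-monotonicity of $\fpl$. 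Thus $u\ge v\ge c\,\ds$ in $D$ for some $c>0$, while on $\Omega\setminus D$ one has $u\ge m_D$ and $\ds\le\|\ds\|_\infty$, so $u/\ds$ is bounded below by a positive constant there as well. Combining the two regions gives $\inf_\Omega u/\ds>0$, which is the assertion; the last statement follows immediately, since $\mathrm{int}(\cs_+)=\{u\in\cs:\inf_\Omega u/\ds>0\}$. I expect the crux to be this final step --- the construction of a barrier comparable to $\ds$ with $\fpl v\le-C_0$ and the correct treatment of the nonlocal complement and tail conditions in the local comparison --- where the $C^{1,1}$ regularity of $\partial\Omega$ genuinely enters.
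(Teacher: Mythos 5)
First, a point of reference: the paper does not prove this proposition at all --- it is imported verbatim from \cite[Theorem 2.6]{IMP} --- so there is no internal proof to compare against. Your three-step architecture (Jordan decomposition of $g$ to reduce to $\fpl u+\Phi(u)\ge 0$ with $\Phi$ continuous, nondecreasing, $\Phi(0)=0$; interior positivity by weak Harnack and chaining; boundary estimate by barrier comparison) is indeed the skeleton of the proofs in the literature (Del Pezzo--Quaas plus the boundary estimates of \cite{IMS}), and Steps 1 and 2 are essentially sound. But there is a genuine gap in Step 3, and it sits exactly where you locate the crux.

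The problem is that you discard the decisive structural information before you reach the boundary. Having obtained $\fpl u\ge-\Phi(u)$, you immediately weaken this to $\fpl u\ge-C_0$ with the \emph{fixed} constant $C_0=\Phi(\|u\|_\infty)$, and Step 3 uses only that. The conclusion $\inf_\Omega u/\ds>0$ is simply false for nonnegative, nontrivial supersolutions with constant negative forcing: already in the local model case $N=1$, $s=1$, $p=2$, the function $u(x)=x^2(1-x)$ on $\Omega=(0,1)$ satisfies $-u''=6x-2\ge-2$, $u\ge 0$, $u\not\equiv 0$, and is bounded below on compact subsets, yet $u(x)/x\to 0$ as $x\to 0^+$. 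Accordingly, the barrier you postulate cannot exist in general: in that model, any $v$ with $v''\ge 2$ on $(0,\eta)$, $v(0)=0$ and $v(\eta)\le m_D=\eta^2(1-\eta)$ satisfies $v(x)<x^2$ on $(0,\eta)$ (since $v-x^2$ is convex, vanishes at $0$ and is negative at $\eta$), so it is \emph{not} comparable to the distance from the boundary. Your proposed remedy of ``scaling by a small constant'' makes matters worse, not better: replacing $v$ by $\kappa v$ with $\kappa<1$ turns $\fpl v\le-C_0$ into $\fpl(\kappa v)\le-\kappa^{p-1}C_0>-C_0$, so the comparison inequality against $\fpl u\ge-C_0$ fails on the contact set; the two requirements you impose on the barrier (subsolution at level $-C_0$, and smallness on $\Omega\setminus D$) are mutually incompatible under the $(p-1)$-homogeneity of $\fpl$. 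The repair must retain the fact that the forcing $-\Phi(u)$ vanishes where $u$ does: since $u\in C^0(\overline\Omega)$ vanishes on $\partial\Omega$ and $\Phi$ is nondecreasing with $\Phi(0)=0$, the right-hand side is small in a thin boundary strip, and on the contact set $\{v>u\}$ one can estimate $\Phi(u)\le\Phi(v)$ by monotonicity. Balancing this against the $\kappa^{p-1}$-scaling of the barrier is precisely the delicate quantitative content of \cite[Theorem 2.6]{IMP} and of the boundary weak Harnack inequality in \cite{IMS}, and it is the step your outline does not actually supply.
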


\begin{proposition}\label{scp}
{\rm\cite[Theorem 2.7]{IMP}} Let $g \in C^0(\R) \cap BV_{\rm loc}(\R)$, $u, v \in \w \cap C^0(\overline\Omega)$ s.t.\ $u \not\equiv v$, $C>0$ satisfy
\[\begin{cases}
\fpl v +g(v) \le \fpl u +g(u) \le C & \text{weakly in $\Omega$} \\
0 < v\le u & \text{in $\Omega$}. 
\end{cases}\]
Then, $u>v$ in $\Omega$. In particular, if $u,v\in{\rm int}(\cs_+)$, then $u-v\in{\rm int}(\cs_+)$.
\end{proposition}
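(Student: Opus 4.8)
The plan is to argue in two stages: first establish the strict inequality $u>v$ throughout $\Omega$ by a contact-point contradiction, and then upgrade this to the cone statement $u-v\in{\rm int}(\cs_+)$ by a Hopf-type boundary estimate patterned on Proposition \ref{smp}. As a preliminary reduction, observe that since $u,v\in C^0(\overline\Omega)$ both vanish in $\Omega^c$ and $v\le u$ in $\Omega$, we actually have $v\le u$ on all of $\R^N$; set $M=\max\{\|u\|_\infty,\|v\|_\infty\}$, so that the relevant values of $u,v$ lie in the compact interval $[0,M]$. The upper bound $\fpl u+g(u)\le C$, together with the continuity of $g$ (hence its boundedness on $[0,M]$), is what guarantees through Proposition \ref{reg} that $u$, and symmetrically $v$, is regular enough for the nonlocal operator to be handled at interior points.

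For the strict inequality I would suppose, by contradiction, that the closed set $Z=\{x\in\Omega:\,u(x)=v(x)\}$ is nonempty, and pick $x_0\in Z$, writing $a=u(x_0)=v(x_0)$. Since $t\mapsto|t|^{p-2}t$ is strictly increasing and $u(y)\ge v(y)$ for a.e.\ $y\in\R^N$, we get
\[|a-u(y)|^{p-2}(a-u(y))\le|a-v(y)|^{p-2}(a-v(y))\quad\text{for a.e.\ }y\in\R^N,\]
with strict inequality on the nonempty open set $\{u>v\}$, which has positive measure because $u\not\equiv v$. Dividing by $|x_0-y|^{N+ps}$ and integrating, this forces $\fpl u(x_0)<\fpl v(x_0)$. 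On the other hand, the differential inequality rearranges to $\fpl u-\fpl v\ge g(v)-g(u)$, which at the contact point reads $\fpl u(x_0)-\fpl v(x_0)\ge g(a)-g(a)=0$, a contradiction (note that monotonicity of $g$ plays no role here, since $u$ and $v$ agree at $x_0$). Hence $Z=\emptyset$ and $u>v$ in $\Omega$.

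For the cone conclusion, assume now $u,v\in{\rm int}(\cs_+)$ and set $w=u-v$, so that $w>0$ in $\Omega$, $w=0$ in $\Omega^c$, and $w\in C_s^\alpha(\overline\Omega)$. On any compact subset of $\Omega$ the continuous function $w$ is bounded below by a positive constant while $\ds$ is bounded above, so $\inf w/\ds>0$ there; the issue is the behavior as $x\to\partial\Omega$, where both $w$ and $\ds$ vanish. This is precisely a Hopf-type estimate, which I would obtain by adapting the barrier construction underlying Proposition \ref{smp}: build a subsolution barrier in a boundary collar and compare it with $w$, the comparison being carried out through the pointwise monotonicity of $t\mapsto|t|^{p-2}t$, and using the $BV_{\rm loc}$ decomposition $g=g_1-g_2$ of $g$ into continuous nondecreasing parts on $[0,M]$ to keep the zeroth-order term under control while preserving the structure $\fpl{\cdot}+\tilde g(\cdot)\ge\tilde g(0)$ required by Proposition \ref{smp}. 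This yields $\inf_\Omega w/\ds>0$, i.e.\ $w\in{\rm int}(\cs_+)$.

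The main obstacle throughout is the nonlinearity of $\fpl$, which forbids any superposition of the form $\fpl(u-v)=\fpl u-\fpl v$ and forces every comparison to be done at the level of the integrands via the monotonicity of $t\mapsto|t|^{p-2}t$. In the strict-inequality step this difficulty reappears as the need to justify evaluating $\fpl u(x_0)-\fpl v(x_0)$ pointwise: the singular integrals are only principal values, so rigorously the touching argument must be run against the weak formulation, exploiting that the difference of integrands has a fixed (nonpositive) sign so that its integral is a well-defined element of $[-\infty,0)$, in contradiction with the weak inequality. In the cone step, the restriction $p\ge 2$ is exactly what makes the needed monotonicity estimate for $t\mapsto|t|^{p-2}t$ available, while the $BV_{\rm loc}$ hypothesis on $g$ is what permits reabsorbing the reaction term into a function $\tilde g$ of the same class, keeping the problem within the scope of the strong maximum principle.
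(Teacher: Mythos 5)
First, a point of order: the paper does not prove this statement at all --- Proposition \ref{scp} is quoted from \cite[Theorem 2.7]{IMP} --- so your proposal can only be measured against that external proof. Your high-level strategy (interior strict inequality from a contact point, exploiting nonlocality; then a separate boundary estimate for the cone statement) is indeed the right one, but both stages have genuine gaps. In the first stage, the inequality $\fpl v+g(v)\le\fpl u+g(u)$ is only assumed to hold \emph{weakly}, i.e., tested against $\varphi\in\w_+$, whereas your contradiction requires evaluating $\fpl u(x_0)-\fpl v(x_0)$ pointwise at the contact point. With $u,v$ merely in $\w\cap C^0(\overline\Omega)$ (and even with the $C^\alpha_s(\overline\Omega)$, $\alpha\le s$, regularity of Proposition \ref{reg}) the principal values need not converge, since $\alpha(p-1)\le s(p-1)<ps$. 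You correctly observe that the \emph{difference} of the two integrands with the base point frozen at $x_0$ has a fixed sign, so that particular integral is well defined in $[-\infty,0)$; but your proposed repair --- ``run the touching argument against the weak formulation, exploiting that the difference of integrands has a fixed sign'' --- does not go through, because in the weak formulation the base point is not frozen: the quantity integrated is $\big(|u(x)-u(y)|^{p-2}(u(x)-u(y))-|v(x)-v(y)|^{p-2}(v(x)-v(y))\big)\big(\varphi(x)-\varphi(y)\big)$, and neither factor has a fixed sign. Passing from the weak inequality to a pointwise statement at $x_0$ requires a quantitative localization (test functions concentrated near $x_0$, splitting the double integral into near and far parts, controlling the near part by continuity and the $g$-terms by the $BV$ decomposition); that is precisely the technical content of the proof in \cite{IMP}, and it is missing here.

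The second stage is in worse shape. You propose to ``build a subsolution barrier in a boundary collar and compare it with $w=u-v$,'' but $w$ satisfies no differential inequality to which a comparison principle could be applied: as you yourself note, $\fpl(u-v)\neq\fpl u-\fpl v$. Proposition \ref{smp} applies to a single function that is a weak supersolution of an autonomous equation $\fpl u+g(u)\ge g(0)$; the difference $w$ is not such a function, and no decomposition $g=g_1-g_2$ into monotone parts turns it into one. The estimate $\inf_\Omega (u-v)/\ds>0$ is the genuinely hard half of \cite[Theorem 2.7]{IMP}; it requires a dedicated argument using that $u$ and $v$ are both comparable to $\ds$ and that the nonlocal tail over the region where $u-v$ is bounded away from zero contributes a quantified negative amount to the (weakly formulated) difference of the operators near $\partial\Omega$. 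As written, your stage two restates the goal rather than proving it.
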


\noindent
Next, we recall some properties related to the following nonlocal, nonlinear eigenvalue problem:
\beq\label{evp}
\begin{cases}
\fpl u = \hat\lambda|u|^{p-1}u & \text{in $\Omega$} \\
u = 0 & \text{in $\Omega^c$.}
\end{cases}
\eeq
Problem \eqref{evp} admits an unbounded sequence of variational (Lusternik-Schnirelmann) eigenvalues $(\hat\lambda_n)$. In particular, we focus on the principal eigenvalue $\hat\lambda_1$:

\begin{proposition}\label{pev}
{\rm\cite[Theorem 6]{LL} \cite[Theorems 4.1, 4.2]{FP}} The smallest eigenvalue of \eqref{evp} is
\[\hat\lambda_1 = \min_{u\in\w\setminus\{0\}}\frac{\|u\|^p}{\|u\|_p^p} > 0,\]
it is simple, isolated, and attained at a unique positive eigenfunction $e_1\in{\rm int}(\cs_+)$ s.t.\ $\|e_1\|_p=1$.
\end{proposition}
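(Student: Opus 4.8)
The plan is to combine the direct method with the strong maximum principle (Proposition \ref{smp}) and a nonlocal Picone inequality, proving in turn the variational characterization and positivity, attainment, constant sign and regularity, simplicity, and finally isolation together with uniqueness of the normalized positive eigenfunction.

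First I would handle the characterization and attainment. Put $R(u)=\|u\|^p/\|u\|_p^p$ on $\w\setminus\{0\}$. Since $\w\hookrightarrow L^p(\Omega)$ is continuous there is $C>0$ with $\|u\|_p\le C\|u\|$, so $R(u)\ge C^{-p}>0$ and hence $\hat\lambda_1:=\inf R>0$. For attainment I would take a minimizing sequence normalized by $\|u_n\|_p=1$; then $(u_n)$ is bounded in $\w$, so along a subsequence $u_n\rightharpoonup e_1$ in $\w$, and by the \emph{compact} embedding $\w\hookrightarrow L^p(\Omega)$ (valid as $p<p^*_s$) one gets $u_n\to e_1$ in $L^p(\Omega)$, whence $\|e_1\|_p=1$ and $e_1\ne0$. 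Weak lower semicontinuity of $\|\cdot\|$ gives $\|e_1\|^p\le\liminf_n\|u_n\|^p=\hat\lambda_1$, so $R(e_1)=\hat\lambda_1$, and a Lagrange-multiplier computation shows every minimizer solves \eqref{evp} weakly with eigenvalue $\hat\lambda_1$.

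Next, constant sign and regularity. Since $[\,|u|\,]_{s,p,\R^N}\le[u]_{s,p,\R^N}$, with equality only when $u$ does not change sign, replacing a minimizer by $|u|$ strictly decreases $R$ unless $u$ has constant sign; hence every first eigenfunction has constant sign, and I may take $e_1\ge0$. As $f_0(x,t)=\hat\lambda_1|t|^{p-2}t$ obeys ${\bf H}_0$, Proposition \ref{reg} gives $e_1\in\cs$. Applying Proposition \ref{smp} with $g\equiv0$ (note $\langle\fpl e_1,\varphi\rangle=\hat\lambda_1\int_\Omega e_1^{p-1}\varphi\,dx\ge0$ for $\varphi\in\w_+$) yields $\inf_\Omega e_1/\ds>0$, i.e.\ $e_1\in{\rm int}(\cs_+)$. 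The \emph{simplicity} step is the one I expect to be the main obstacle. Let $u,v$ be positive first eigenfunctions; by the previous step both lie in ${\rm int}(\cs_+)$, so $u/v$ is bounded above and below and $u^p/v^{p-1}\in\w$ is an admissible test function. The key tool is the discrete nonlocal Picone inequality: for $u\ge0$, $v>0$,
\[\iint_{\R^N\times\R^N}\frac{|v(x)-v(y)|^{p-2}(v(x)-v(y))}{|x-y|^{N+ps}}\Big(\frac{u(x)^p}{v(x)^{p-1}}-\frac{u(y)^p}{v(y)^{p-1}}\Big)\,dx\,dy\le\|u\|^p,\]
with equality if and only if $u/v$ is constant. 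Testing the weak equation for $v$ with $u^p/v^{p-1}$ makes the left-hand side equal to $\hat\lambda_1\|u\|_p^p=\|u\|^p$, so equality holds and $u=cv$. The delicate points are the admissibility of the test function (handled via $u,v\in{\rm int}(\cs_+)$) and the equality case, which I would import from the fractional Picone inequality in the cited literature.

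Finally, isolation and uniqueness. Simplicity makes the $\hat\lambda_1$-eigenspace one-dimensional, and since first eigenfunctions keep a constant sign, the positive ones are exactly the multiples $ce_1$ with $c>0$; the normalization $\|e_1\|_p=1$ then selects a unique one. For isolation I argue by contradiction: if eigenvalues $\mu_n\downarrow\hat\lambda_1$ with $\mu_n>\hat\lambda_1$ had normalized eigenfunctions $u_n$, then $\|u_n\|^p=\mu_n$ is bounded, so $u_n\rightharpoonup u$ in $\w$ and $u_n\to u$ in $L^p(\Omega)$; the $(S)_+$-property of $\fpl$ upgrades this to $u_n\to u$ in $\w$, and $u$ is a first eigenfunction, hence of constant sign, say $u>0$ a.e. A further Picone argument (testing with $e_1^p/w^{p-1}$) shows any constant-sign eigenfunction has eigenvalue $\hat\lambda_1$, so each $u_n$ must change sign; thus $\Omega_n^\pm=\{\pm u_n>0\}$ have positive measure. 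Combining $\|u_n^\pm\|^p\le\langle\fpl u_n,\pm u_n^\pm\rangle=\mu_n\|u_n^\pm\|_p^p$ with $\w\hookrightarrow L^{p^*_s}(\Omega)$ and Hölder's inequality gives a uniform bound $|\Omega_n^\pm|\ge c>0$, whereas $u_n\to u>0$ a.e.\ forces $|\Omega_n^-|\to0$ by dominated convergence, a contradiction. This completes the proof.
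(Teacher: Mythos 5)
Proposition \ref{pev} is not proved in the paper at all: it is imported directly from \cite[Theorem 6]{LL} and \cite[Theorems 4.1, 4.2]{FP}, so there is no internal argument to compare yours against; what you have written is essentially a reconstruction of the proofs in that cited literature, and it is sound in outline. The direct method with the compact embedding $\w\hookrightarrow L^p(\Omega)$ gives attainment; the inequality $[\,|u|\,]_{s,p,\R^N}\le[u]_{s,p,\R^N}$, which in the nonlocal setting is \emph{strict} for sign-changing $u$ (pointwise, on a product set of positive measure), gives constant sign; your use of Proposition \ref{reg} followed by Proposition \ref{smp} with $g\equiv 0$ is exactly how the refined membership $e_1\in{\rm int}(\cs_+)$ — which is stronger than what \cite{LL,FP} literally state — is obtained inside this paper's framework; Picone gives simplicity and the fact that constant-sign eigenfunctions are principal; and the nodal-domain measure estimate gives isolation. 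Two caveats, both of which you correctly flag as delicate: first, $u^p/v^{p-1}$ need not obviously lie in $\w$ even for $u,v\in{\rm int}(\cs_+)$, and the standard fix is to test with $u^p/(v+\eps)^{p-1}$ (admissible because $u,v\in\w\cap L^\infty(\Omega)$ and $v+\eps\ge\eps$, and the increments of $v+\eps$ equal those of $v$) and let $\eps\to0^+$; second, the equality case must then be recovered in the limit, e.g.\ by Fatou's lemma applied to the nonnegative pointwise Picone deficit, after which the everywhere-positive kernel yields $u/v$ constant directly, with no connectedness argument needed. Note also that \cite{FP} actually proves simplicity via hidden convexity (convexity of the Gagliardo seminorm along $t\mapsto((1-t)u^p+tv^p)^{1/p}$) rather than via Picone testing, so your route is closer to Brasco--Franzina; both are valid. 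A final typographical point: \eqref{evp} is written in the paper with $|u|^{p-1}u$, and your reading $|u|^{p-2}u$ is the intended one.
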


\noindent
All the non-principal eigenfunctions of \eqref{evp} are nodal (i.e., sign-changing) in $\Omega$. More generally, we have the following anti-maximum principle for the degenerate case:

\begin{proposition}\label{amp}
{\rm\cite[Lemma 3.9]{FI1}} Let $p\ge 2$, $\lambda\ge\hat\lambda_1$, $\beta\in L^\infty(\Omega)_+\setminus\{0\}$, and $u\in\w$ be a solution of
\[\begin{cases}
\fpl u = \lambda|u|^{p-2}u+\beta(x) & \text{in $\Omega$} \\
u = 0 & \text{in $\Omega^c$.}
\end{cases}\]
Then, $u^-\neq 0$.
\end{proposition}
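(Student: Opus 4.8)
The plan is to argue by contradiction: assume $u^-=0$, i.e.\ $u\ge 0$ in $\Omega$, and derive $\beta\equiv 0$, against $\beta\neq 0$. The key mechanism will be a fractional Picone-type inequality, applied in the (slightly unusual) configuration where the \emph{solution} $u$ itself is the base function and the principal eigenfunction $e_1$ sits in the numerator; the hypothesis $\lambda\ge\hat\lambda_1$ enters precisely to close the sign.

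First I would upgrade the sign condition to strict positivity. Since $u\ge 0$ and $\beta\neq 0$, the equation forces $u\not\equiv 0$ (otherwise $\beta\equiv 0$). The reaction $f_0(x,t)=\lambda|t|^{p-2}t+\beta(x)$ obeys ${\bf H}_0$ with $r=p$, so Proposition \ref{reg} gives $u\in\cs\cap C^0(\overline\Omega)$. I would then apply the strong maximum principle Proposition \ref{smp} with $g(t)=-\lambda|t|^{p-2}t$, which for $p\ge 2$ is locally Lipschitz and hence lies in $C^0(\R)\cap BV_{\rm loc}(\R)$: testing the weak equation against $\varphi\in\w_+$ yields
\[\langle\fpl u+g(u),\varphi\rangle=\langle\fpl u,\varphi\rangle-\lambda\int_\Omega|u|^{p-2}u\,\varphi\,dx=\int_\Omega\beta\,\varphi\,dx\ge 0=\langle g(0),\varphi\rangle,\]
so $\inf_\Omega u/\ds>0$ and $u\in{\rm int}(\cs_+)$. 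In particular $u>0$ in $\Omega$ and $u\ge c\,\ds$ near $\partial\Omega$.

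Next I would insert the test function $\varphi=e_1^p/u^{p-1}$ into the weak formulation. As $u,e_1\in{\rm int}(\cs_+)$, the quotient satisfies $\varphi\lesssim\ds$ up to the boundary and is bounded, while $u^{p-1}\varphi=e_1^p$ pointwise; using $\|e_1\|_p=1$ the weak equation gives
\[\Big\langle\fpl u,\frac{e_1^p}{u^{p-1}}\Big\rangle=\lambda\int_\Omega e_1^p\,dx+\int_\Omega\beta\,\frac{e_1^p}{u^{p-1}}\,dx=\lambda+\int_\Omega\beta\,\frac{e_1^p}{u^{p-1}}\,dx.\]
On the other hand, the fractional Picone inequality applied with base $u>0$ and numerator $e_1\ge 0$, together with $\fpl e_1=\hat\lambda_1 e_1^{p-1}$ from Proposition \ref{pev}, yields
\[\Big\langle\fpl u,\frac{e_1^p}{u^{p-1}}\Big\rangle\le\|e_1\|^p=\langle\fpl e_1,e_1\rangle=\hat\lambda_1\int_\Omega e_1^p\,dx=\hat\lambda_1.\]
Subtracting and using $\lambda\ge\hat\lambda_1$,
\[0\le\int_\Omega\beta\,\frac{e_1^p}{u^{p-1}}\,dx\le\hat\lambda_1-\lambda\le 0,\]
so the integral vanishes. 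Since $e_1^p/u^{p-1}>0$ a.e.\ in $\Omega$ and $\beta\ge 0$, this forces $\beta\equiv 0$, contradicting $\beta\neq 0$; hence $u^-\neq 0$.

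The delicate point I expect is the admissibility of $\varphi=e_1^p/u^{p-1}$ (i.e.\ $\varphi\in\w$) and the passage from the algebraic Picone inequality to the functional inequality above without boundary losses. Both hinge on the two-sided boundary behavior $u\ge c\,\ds$ and $e_1\le C\,\ds$ provided by $u,e_1\in{\rm int}(\cs_+)$, which make $\varphi\lesssim\ds$ and keep the Gagliardo seminorm $[\varphi]_{s,p,\R^N}$ finite. Once these regularity facts are in hand, the contradiction is purely algebraic and, notably, requires no quantitative control of $\lambda-\hat\lambda_1$: the conclusion holds uniformly for every $\lambda\ge\hat\lambda_1$.
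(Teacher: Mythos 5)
First, note that this paper does not actually prove Proposition \ref{amp}: it is imported verbatim from \cite[Lemma 3.9]{FI1}, so there is no internal proof to compare against. Judged on its own merits, your strategy --- assume $u^-=0$, upgrade $u\ge 0$, $u\not\equiv 0$ to $u\in{\rm int}(\cs_+)$ via Proposition \ref{smp} with $g(t)=-\lambda|t|^{p-2}t$ (admissible: continuous and locally of bounded variation for $p\ge 2$, and the weak inequality $\fpl u+g(u)=\beta\ge 0=g(0)$ holds), then play the discrete fractional Picone inequality with $e_1$ in the numerator against the equation to force $\int_\Omega\beta\,e_1^p/u^{p-1}\,dx\le\hat\lambda_1-\lambda\le 0$ --- is a standard and workable route to this kind of anti-maximum principle, and the final algebraic step correctly extracts $\beta\equiv 0$ from the sign information.

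The one genuine gap is precisely the point you flag and then wave away: the admissibility of $\varphi=e_1^p/u^{p-1}$ as a test function in $\w$. The pointwise envelope $\varphi\le C\ds$ together with $\varphi\in L^\infty(\Omega)$ does \emph{not} imply $[\varphi]_{s,p,\R^N}<\infty$: a pointwise bound controls no oscillation, and even granting that $\varphi/\ds$ inherits $\alpha$-H\"older continuity from $u,e_1\in C^\alpha_s(\overline\Omega)$ with $u/\ds$ bounded away from zero, an embedding $C^\alpha_s(\overline\Omega)\subset\w$ is nowhere available in the paper (Proposition \ref{reg} goes in the opposite direction), and the naive product estimate for $[\,g\,\ds]_{s,p,\R^N}$ fails when $\alpha\le s$. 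The standard repair is to test instead with $\varphi_\eps=e_1^p/(u+\eps)^{p-1}$ for $\eps>0$: since $u,e_1\in\w\cap L^\infty(\Omega)$ and $(a,b)\mapsto a^p/(b+\eps)^{p-1}$ is Lipschitz on the relevant bounded rectangle, one gets $\varphi_\eps\in\w$ with no boundary analysis at all; the discrete Picone inequality applied to $u+\eps$ (which has the same Gagliardo differences as $u$) still yields $\langle\fpl u,\varphi_\eps\rangle\le\|e_1\|^p=\hat\lambda_1$, and monotone convergence --- using only $u>0$ a.e.\ in $\Omega$, which the maximum principle step supplies --- lets you pass to the limit in $\int_\Omega(\lambda u^{p-1}+\beta)\varphi_\eps\,dx$ and recover $\lambda+\int_\Omega\beta\,e_1^p/u^{p-1}\,dx\le\hat\lambda_1$. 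With that substitution your proof closes; as written, its crucial step rests on an unjustified (and, as stated, false) implication.
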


\noindent
Finally, we introduce a variational framework for problem \eqref{dir}. For all $(x,t)\in\Omega\times\R$ set
\[F_0(x,t) = \int_0^t f_0(x,\tau)\,d\tau,\]
and for all $u\in\w$ set
\[\Phi_0(u) = \frac{\|u\|^p}{p}-\int_\Omega F_0(x,u)\,dx.\]
Then $\Phi_0\in C^1(\w)$ and its critical points coincide with the solutions of \eqref{dir}. Besides, $\Phi_0$ is sequentially weakly l.s.c.\ in $\w$ and its local minimizers in the topologies of $\w$ and $\cs$, respectively, coincide:

 \begin{proposition}\label{svh}
{\rm \cite[Theorem 1.1]{IMS1}} Let $p\ge 2$, ${\bf H}_0$ hold, and $u\in\w$. Then, the following are equivalent:
\begin{enumroman}
\item\label{svh1} there exists $\rho>0$ s.t.\ $\Phi_0(u+v)\ge\Phi_0(u)$ for all $v\in\w\cap C_s^0(\overline\Omega)$, $\|v\|_{0,s}\le\rho$;
\item\label{svh2} there exists $\sigma>0$ s.t.\ $\Phi_0(u+v)\ge\Phi_0(u)$ for all $v\in\w$, $\|v\| \le\sigma$.
\end{enumroman}
\end{proposition}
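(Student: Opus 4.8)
The statement is the fractional, degenerate counterpart of the Brezis--Nirenberg principle that local minimizers of the energy in the regular ($\cs$) and in the energy ($\w$) topologies coincide. I would first stress a structural point that governs the whole proof: unlike in the classical local setting, \emph{neither} implication is a free consequence of a continuous embedding. Indeed the $\w$-norm does not control $\|\cdot\|_{0,s}$ (elements of $\w$ need not be continuous), while conversely a perturbation with $\|v\|_{0,s}$ small can be arbitrarily $\w$-large (e.g.\ a highly oscillating multiple of $\ds$), so $\{v\in\w\cap\cs:\|v\|_{0,s}\le\rho\}$ is $\w$-unbounded. Hence both directions need genuine work; the substantial one is \ref{svh1}$\Rightarrow$\ref{svh2} (H\"older minimizer $\Rightarrow$ Sobolev minimizer), which I treat by contradiction, and the reverse turns out to be softer.

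For \ref{svh1}$\Rightarrow$\ref{svh2}, assume \ref{svh1} holds but \ref{svh2} fails. For small $\eps>0$ minimize $\Phi_0$ over the closed ball $\overline B_\eps(u)=\{w\in\w:\|w-u\|\le\eps\}$: since $\w$ is uniformly convex (hence reflexive) this ball is weakly compact, and $\Phi_0$ is sequentially weakly l.s.c., so the infimum is attained at some $u_\eps$. Because \ref{svh2} fails, every such ball contains a point of energy $<\Phi_0(u)$, whence $\Phi_0(u_\eps)<\Phi_0(u)$ and $u_\eps\neq u$. Writing the constraint as $\tfrac1p\|w-u\|^p\le\tfrac{\eps^p}{p}$ and using $\partial_w\big[\tfrac1p\|w-u\|^p\big]=\fpl(w-u)$, the Lagrange multiplier rule produces $\mu_\eps\ge0$ with
\[\fpl u_\eps+\mu_\eps\,\fpl(u_\eps-u)=f_0(\cdot,u_\eps)\quad\text{weakly in }\Omega.\]

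The crux is a uniform weighted-H\"older bound $\|u_\eps\|_{\alpha,s}\le C$, with $C$ and $\alpha\in(0,s]$ independent of $\eps$. If $\mu_\eps=0$ this is immediate: $u_\eps$ is then a genuine solution of \eqref{dir} with $\|u_\eps\|\le\|u\|+\eps$ bounded, so Propositions \ref{apb} and \ref{reg} apply with a uniform constant. The active-constraint case $\mu_\eps>0$ is the \emph{main obstacle}: here one faces the sum of two $\fpl$-type operators, to which Propositions \ref{apb}, \ref{reg} do not apply verbatim. I would first show that $\mu_\eps$ stays bounded (test the equation with $u_\eps-u$ and exploit the $(S)_+$ and monotonicity properties together with $\|u_\eps-u\|\le\eps\to0$), and then invoke a global regularity estimate for the perturbed monotone operator $w\mapsto\fpl w+\mu_\eps\fpl(w-u)$ that is stable under the extra term, recovering a uniform $C_s^\alpha(\overline\Omega)$ bound; this is the genuinely technical heart of the proof. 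Granting it, I close by compactness: the embedding $C_s^{\alpha}(\overline\Omega)\hookrightarrow\cs$ is compact, so $\{u_\eps\}$ is precompact in $\cs$, while $\|u_\eps-u\|\le\eps$ forces $u_\eps\to u$ in $\w$ (hence a.e.\ along a subsequence); thus every $\cs$-cluster point is $u$, so $u_\eps\to u$ in $\cs$. Then $\|u_\eps-u\|_{0,s}\le\rho$ for small $\eps$, and \ref{svh1} gives $\Phi_0(u_\eps)\ge\Phi_0(u)$, contradicting $\Phi_0(u_\eps)<\Phi_0(u)$.

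For the reverse implication \ref{svh2}$\Rightarrow$\ref{svh1} I would argue directly, using $p\ge2$. Since a $\w$-local minimizer $u$ is a critical point, it solves \eqref{dir}, so $u\in L^\infty(\Omega)$ by Proposition \ref{apb} and $\langle\fpl u,v\rangle=\int_\Omega f_0(x,u)v\,dx$ for all $v\in\w$. The elementary inequality $|a+b|^p\ge|a|^p+p|a|^{p-2}ab+c_p|b|^p$ ($p\ge2$), integrated against the Gagliardo kernel, yields $\tfrac1p\|u+v\|^p\ge\tfrac1p\|u\|^p+\langle\fpl u,v\rangle+\tfrac{c_p}{p}\|v\|^p$, whence
\[\Phi_0(u+v)-\Phi_0(u)\ge\frac{c_p}{p}\|v\|^p-\Big|\int_\Omega\big[F_0(x,u+v)-F_0(x,u)-f_0(x,u)v\big]\,dx\Big|.\]
A standard continuity estimate (using $u\in L^\infty$ and $\|v\|_\infty\le C\|v\|_{0,s}\le C\rho$) bounds the last term by $C\,\omega(\|v\|_\infty)\|v\|$ with $\omega(0^+)=0$. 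Now take $v\in\w\cap\cs$ with $\|v\|_{0,s}\le\rho$: if $\|v\|\le\sigma$ then $\Phi_0(u+v)\ge\Phi_0(u)$ directly by \ref{svh2}; if $\|v\|>\sigma$ then the displayed lower bound is at least $\|v\|\big(\tfrac{c_p}{p}\sigma^{p-1}-C\omega(C\rho)\big)\ge0$ once $\rho$ is chosen small, since the strong-convexity gauge $\tfrac{c_p}{p}\|v\|^p$ outweighs the small potential remainder. This gives \ref{svh1} and completes the equivalence.
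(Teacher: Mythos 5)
You should first know that the paper contains no proof of this proposition at all: it is quoted verbatim from \cite[Theorem 1.1]{IMS1}, a result whose proof occupies a separate paper. Your skeleton for the hard direction \ref{svh1}$\Rightarrow$\ref{svh2} does mirror the strategy actually used there (and in its $p=2$ precursor): contradiction, minimization of $\Phi_0$ over the balls $\overline B_\eps(u)$, a Lagrange multiplier equation, a uniform weighted H\"older bound, and the compact embedding $C^\alpha_s(\overline\Omega)\hookrightarrow\cs$ to convert $\|u_\eps-u\|\le\eps$ into $u_\eps\to u$ in $\cs$. But the step you defer --- a ``global regularity estimate for the perturbed monotone operator'' giving $\|u_\eps\|_{\alpha,s}\le C$ uniformly in $\eps$ for solutions of
\[
\fpl u_\eps+\mu_\eps\,\fpl(u_\eps-u)=f_0(x,u_\eps)\quad\text{weakly in }\Omega
\]
--- is a genuine gap, not a technical formality: no such off-the-shelf estimate exists. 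Propositions \ref{apb} and \ref{reg} (i.e.\ \cite{CMS} and \cite{IMS}) concern $\fpl$ alone, and the sum of two fractional $p$-Laplacian-type operators centered at different functions is not of that form. For $p=2$ the obstacle disappears by linearity: using that $u$ itself solves \eqref{dir}, the multiplier equation collapses to $(-\Delta)^s u_\eps=\frac{f_0(x,u_\eps)+\mu_\eps f_0(x,u)}{1+\mu_\eps}$, whose right-hand side is bounded uniformly in $\mu_\eps$, and the standard theory applies. For $p>2$ no such merging is possible, and establishing the weighted H\"older estimate \emph{uniformly in the multiplier} is precisely the technical content of \cite{IMS1}. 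Your argument therefore reduces the theorem to its hardest ingredient rather than proving it.

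Moreover, the auxiliary step you propose to control that ingredient fails. Testing the multiplier equation with $u_\eps-u$ and using ${\bf H}_0$, $\|u_\eps\|\le\|u\|+\eps$, and the H\"older and Sobolev inequalities gives
\[
\mu_\eps\|u_\eps-u\|^p=\int_\Omega f_0(x,u_\eps)(u_\eps-u)\,dx-\langle\fpl u_\eps,u_\eps-u\rangle\le C\|u_\eps-u\|,
\]
i.e.\ $\mu_\eps\le C\|u_\eps-u\|^{1-p}$; since $p\ge 2$ and the constraint being active means $\|u_\eps-u\|=\eps\to 0$, this bound \emph{blows up}, so monotonicity/$(S)_+$ testing does not yield boundedness of $\mu_\eps$ --- which is exactly why the estimate must be proved uniformly in $\mu\in[0,\infty)$. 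By contrast, your converse direction \ref{svh2}$\Rightarrow$\ref{svh1} is essentially sound and is genuinely different from a soft embedding argument (correctly so, since none is available here): a $\w$-local minimizer is a critical point, Proposition \ref{apb} gives $u\in L^\infty(\Omega)$, and the strong-convexity inequality $|a+b|^p\ge|a|^p+p|a|^{p-2}ab+c_p|b|^p$ ($p\ge2$) integrated against the Gagliardo kernel gives your lower bound. The one caveat is that a Carath\'eodory $f_0$ admits no $x$-uniform modulus of continuity; the remainder must be estimated in integrated form, e.g.\ with $\omega(x,\delta)=\sup\{|f_0(x,s)-f_0(x,s')|:|s|,|s'|\le M,\,|s-s'|\le\delta\}$ one has $\big(\int_\Omega\omega(x,\delta)^{p'}\,dx\big)^{1/p'}\to 0$ as $\delta\to 0^+$ by dominated convergence, after which your dichotomy between $\|v\|\le\sigma$ and $\|v\|>\sigma$ closes the argument.
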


\section{Bifurcation-type result}\label{sec3}

\noindent
This section is devoted to the proof of Theorem \ref{bif}, which we split in several lemmas. We recall that $p\ge 2$, $s\in(0,1)$ satisfy $ps<N$, that $\Omega\subset\R^N$ is a bounded domain with $C^{1,1}$-boundary, and that the reaction $f$ in problem $(P_\lambda)$ satisfies the standing hypotheses ${\bf H}$ (for simplicity we shall omit such assumptions in the results of this section). Since ${\bf H}$ only deal with $t\ge 0$, without loss of generality we set for all $(x,t,\lambda)\in\Omega\times\R^-\times\R^+_0$
\[f(x,t,\lambda) = 0.\]
We note that, by ${\bf H}$ \ref{h1}, $f(\cdot,\cdot,\lambda):\Omega\times\R\to\R$ satisfies ${\bf H}_0$ for all $\lambda>0$. For all $\lambda>0$, $u\in\w$ we define the energy functional of $(P_\lambda)$
\[\Phi_\lambda(u) = \frac{\|u\|^p}{p}-\int_\Omega F(x,u,\lambda)\,dx.\]
We begin with a sub-supersolution principle:

\begin{lemma}\label{ssp}
Let $\lambda>0$, $\bar{u}\in{\rm int}(\cs_+)$ be a supersolution of $(P_\lambda)$. Then, there exists a solution $u\in{\rm int}(\cs_+)$ of $(P_\lambda)$ s.t.\ $u\le\bar{u}$ in $\Omega$.
\end{lemma}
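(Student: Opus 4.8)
\emph{Proof proposal.} The plan is to adapt the classical sub-supersolution argument to the nonlocal setting by truncating the reaction above the supersolution and minimizing the associated energy. Since by convention $f(x,t,\lambda)=0$ for $t\le 0$, I truncate only from above by setting
\[
\hat f(x,t,\lambda)=\begin{cases} f(x,t,\lambda) & \text{if } 0\le t\le\bar u(x),\\ f(x,\bar u(x),\lambda) & \text{if } t>\bar u(x),\\ 0 & \text{if } t<0,\end{cases}
\]
with primitive $\hat F$ and truncated functional $\hat\Phi_\lambda(u)=\|u\|^p/p-\int_\Omega\hat F(x,u,\lambda)\,dx$. Because $\bar u\in L^\infty(\Omega)$, hypothesis \ref{h1} makes $\hat f(\cdot,\cdot,\lambda)$ bounded, so $\hat F$ grows at most linearly in $t$; hence $\hat\Phi_\lambda$ is coercive, and it is sequentially weakly l.s.c.\ by convexity of the norm term together with the weak continuity of $u\mapsto\int_\Omega\hat F(x,u,\lambda)\,dx$, which follows from the compact embedding $\w\hookrightarrow L^1(\Omega)$. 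The direct method then yields a global minimizer $u\in\w$, which is a critical point of $\hat\Phi_\lambda$ and thus satisfies $\langle\fpl u,\varphi\rangle=\int_\Omega\hat f(x,u,\lambda)\varphi\,dx$ for all $\varphi\in\w$.

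The core step is to show $0\le u\le\bar u$, which makes the truncation inactive and turns the equation back into $(P_\lambda)$. For the lower bound I use $\|u^-\|^p\le\langle\fpl u,-u^-\rangle$: testing with $\varphi=-u^-$ and noting that $\hat f(x,u,\lambda)=0$ on $\{u<0\}$, the right-hand side vanishes, so $u^-=0$. For the upper bound I compare $u$ with $\bar u$: testing the identity for $u$ and the supersolution inequality for $\bar u$ against $\varphi=(u-\bar u)^+\in\w_+$ and subtracting, I obtain
\[
\langle\fpl u-\fpl\bar u,(u-\bar u)^+\rangle\le\int_\Omega\big[\hat f(x,u,\lambda)-f(x,\bar u,\lambda)\big](u-\bar u)^+\,dx.
\]
On $\{u>\bar u\}$ the upper truncation forces $\hat f(x,u,\lambda)=f(x,\bar u,\lambda)$, so the integrand vanishes; the strict $(T)$-monotonicity of $\fpl$ then gives $u\le\bar u$, and $u$ is a genuine weak solution of $(P_\lambda)$.

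It remains to rule out $u\equiv0$ and to place $u$ in the interior of the cone. For nontriviality I invoke \ref{h4} with $\Lambda=\lambda$, obtaining $c_2,\delta>0$ and $q\in(1,p)$ with $f(x,t,\lambda)\ge c_2t^{q-1}$ on $[0,\delta]$, whence $F(x,t,\lambda)\ge(c_2/q)t^q$ there. Since $\bar u,e_1\in{\rm int}(\cs_+)$, for small $\tau>0$ one has $\tau e_1\le\min\{\delta,\bar u\}$ in $\Omega$, so the truncation is off and, because $q<p$,
\[
\hat\Phi_\lambda(\tau e_1)\le\frac{\tau^p}{p}\|e_1\|^p-\frac{c_2}{q}\tau^q\int_\Omega e_1^q\,dx<0
\]
for $\tau$ small; hence $\inf_\w\hat\Phi_\lambda<0=\hat\Phi_\lambda(0)$ and $u\not\equiv0$. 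Finally Proposition \ref{reg} gives $u\in\cs$, and to apply the strong maximum principle I set $T=\|\bar u\|_\infty$ and use \ref{h5} to pick $\sigma>0$ making $t\mapsto f(x,t,\lambda)+\sigma t^{p-1}$ nondecreasing on $[0,T]$; with $g(t)=\sigma|t|^{p-2}t\in C^0(\R)\cap BV_{\rm loc}(\R)$ and $g(0)=0$, monotonicity yields $\fpl u+g(u)=f(x,u,\lambda)+\sigma u^{p-1}\ge0=g(0)$ weakly, so Proposition \ref{smp} gives $u\in{\rm int}(\cs_+)$.

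I expect the ordering step $u\le\bar u$ to be the main obstacle, since it is where the supersolution property, the precise form of the truncation, and the $(T)$-monotonicity of the nonlocal operator must be combined; the nontriviality estimate is delicate mainly in checking that $\tau e_1$ stays below both $\delta$ and $\bar u$, which is exactly where membership of $\bar u$ and $e_1$ in ${\rm int}(\cs_+)$ is used so that the sublinear bound from \ref{h4} applies.
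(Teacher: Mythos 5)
Your proposal is correct and follows essentially the same route as the paper: the same upper truncation of the reaction, the direct method for the truncated functional, the test functions $-u^-$ and $(u-\bar u)^+$ with strict $(T)$-monotonicity to get $0\le u\le\bar u$, nontriviality via $\hat\Phi_\lambda(\tau e_1)<0$ using ${\bf H}$ (iv), and finally ${\bf H}$ (v) plus Propositions \ref{reg} and \ref{smp} to place $u$ in ${\rm int}(\cs_+)$. The only deviations (reordering the ordering/nontriviality steps, bounding the truncated reaction instead of its primitive, and taking $g(t)=\sigma|t|^{p-2}t$ rather than $\sigma(t^+)^{p-1}$) are cosmetic and equally valid.
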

\begin{proof}
We perform a truncation on the reaction, setting for all $(x,t)\in\Omega\times\R$
\[\bar{f}_\lambda(x,t) = \begin{cases}
f(x,t,\lambda) & \text{if $t<\bar{u}(x)$} \\
f(x,\bar{u}(x),\lambda) & \text{if $t\ge\bar{u}(x)$}
\end{cases}\]
and
\[\bar{F}_\lambda(x,t) = \int_0^t \bar{f}_\lambda(x,\tau)\,d\tau.\]
By ${\bf H}$ \ref{h1}, $\bar{f}_\lambda:\Omega\times\R\to\R$ satisfies ${\bf H}_0$. Moreover, for a.e.\ $x\in\Omega$ and all $t>\bar{u}(x)$ we have
\begin{align*}
\bar{F}_\lambda(x,t) &= \int_0^{\bar{u}} f(x,\tau,\lambda)\,d\tau+\int_{\bar{u}}^t f(x,\bar{u},\lambda)\,dx \\
&\le \int_0^{\bar{u}}\big(a_\lambda(x)+c_1\tau^{r-1}\big)\,d\tau+\int_{\bar{u}}^t\big(a_\lambda(x)+c_1\bar{u}^{r-1}\big)\,d\tau \le C(1+t).
\end{align*}
More generally, for a.e.\ $x\in\Omega$ and all $t\in\R$
\beq\label{ssp1}
\bar{F}_\lambda(x,t) \le C(1+|t|).
\eeq
Also we set for all $u\in\w$
\[\bar{\Phi}_\lambda(u) = \frac{\|u\|^p}{p}-\int_\Omega \bar{F}_\lambda(x,u)\,dx.\]
By ${\bf H}$ \ref{h1}, $\bar\Phi_\lambda\in C^1(\w)$ is sequentially weakly l.s.c. Besides, by \eqref{ssp1} and the continuous embedding $\w\hookrightarrow L^1(\Omega)$ we have for all $u\in\w$
\begin{align*}
\bar\Phi_\lambda(x) &\ge \frac{\|u\|^p}{p}-\int_\Omega C(1+|u|)\,dx \\
&\ge \frac{\|u\|^p}{p}-C(1+\|u\|),
\end{align*}
and the latter tends to $\infty$ as $\|u\|\to\infty$. So, $\bar\Phi_\lambda$ is coercive in $\w$. Thus, there exists $u\in\w$ s.t.\
\[\bar\Phi_\lambda(u) = \inf_{\w}\bar\Phi_\lambda = \bar{m}_\lambda.\]
Now let $e_1\in{\rm int}(\cs_+)$ be defined as in Proposition \ref{pev}, $\delta>0$ be as in ${\bf H}$ \ref{h4}. Then we can find $\tau>0$ s.t.\ in $\Omega$
\[0 < \tau e_1 < \min\{\bar{u},\,\delta\}.\]
By ${\bf H}$ \ref{h4} and the construction of $\bar\Phi_\lambda$ we have
\begin{align*}
\bar\Phi_\lambda(\tau e_1) &= \frac{\tau^p\|e_1\|^p}{p}-\int_\Omega F(x,\tau e_1,\lambda)\,dx \\
&\le \frac{\tau^p\hat\lambda_1}{p}-\frac{\tau^q c_2\|e_1\|_q^q}{q},
\end{align*}
and the latter is negative for $\tau>0$ even smaller if necessary (by $q<p$). So $\bar{m}_\lambda<0$, which in turn implies $u_\lambda\neq 0$. By minimization we have weakly in $\Omega$
\beq\label{ssp2}
\fpl u = \bar{f}_\lambda(x,u).
\eeq
By Proposition \ref{reg} we have $u\in C^\alpha_s(\overline\Omega)$. Testing \eqref{ssp2} with $-u^-\in\w$ we have
\begin{align*}
\|u^-\|^p &\le \langle\fpl u,-u^-\rangle \\
&= \int_{\{u<0\}} f(x,u,\lambda)u\,dx = 0.
\end{align*}
So $u\ge 0$ in $\Omega$. On the other hand, testing \eqref{ssp2} with $(u-\bar{u})^+\in\w$ and recalling that $\bar{u}$ is a supersolution of $(P_\lambda)$, we have
\[\langle\fpl u-\fpl\bar{u},(u-\bar{u})^+\rangle \le \int_{\{u>\bar{u}\}} \big(\bar{f}_\lambda(x,u)-f(x,\bar{u},\lambda)\big)(u-\bar{u})\,dx = 0.\]
By strict $(T)$-monotonicity of $\fpl$ we have $u\le\bar{u}$ in $\Omega$. By construction, then, we can rephrase \eqref{ssp2} and have weakly in $\Omega$
\[\fpl u = f(x,u,\lambda).\]
By ${\bf H}$ \ref{h5} (with $T=\|\bar{u}\|_\infty$ and $\Lambda=\lambda$) there exists $\sigma>0$ s.t.\ for a.e.\ $x\in\Omega$ the mapping
\[t \mapsto f(x,t,\lambda)+\sigma t^{p-1}\]
is nondecreasing in $[0,\|\bar{u}\|_\infty]$. So weakly in $\Omega$
\[\fpl u+\sigma u^{p-1} = f(x,u,\lambda)+\sigma u^{p-1} \ge 0.\]
By Proposition \ref{smp} (with $g(t)=\sigma(t^+)^{p-1}$) we have $u\in{\rm int}(\cs_+)$, in particular $u>0$ in $\Omega$, so $u$ solves $(P_\lambda)$.
\end{proof}

\noindent
Set
\beq\label{str}
\lambda^* = \sup\big\{\lambda>0:\,(P_\lambda) \ \text{has a solution $u_\lambda\in{\rm int}(\cs_+)$}\big\}
\eeq
(with the convention $\inf\,\emptyset=\infty$). We will now establish some properties of $\lambda^*$:

\begin{lemma}\label{exi}
Let $\lambda^*$ be defined by \eqref{str}. Then we have
\begin{enumroman}
\item\label{exi1} $0<\lambda^*<\infty$;
\item\label{exi2} for all $\lambda\in(0,\lambda^*)$ $(P_\lambda)$ has a solution $u_\lambda\in{\rm int}(\cs_+)$;
\item\label{exi3} for all $\lambda,\mu\in(0,\lambda^*)$ s.t.\ $\lambda<\mu$ we have $u_\mu-u_\lambda\in{\rm int}(\cs_+)$.
\end{enumroman}
\end{lemma}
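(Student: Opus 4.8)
The plan is to dispatch the three claims using, in turn, the sub-supersolution principle of Lemma~\ref{ssp}, the anti-maximum principle of Proposition~\ref{amp}, and the strong comparison principle of Proposition~\ref{scp}. First, to see that $\lambda^*>0$, I would produce a supersolution for small $\lambda$. Let $z\in{\rm int}(\cs_+)$ solve the torsion problem $\fpl z=1$ in $\Omega$, $z=0$ in $\Omega^c$ (existence by minimizing $u\mapsto\|u\|^p/p-\int_\Omega u$, positivity by Proposition~\ref{smp}). By $(p-1)$-homogeneity $\fpl(tz)=t^{p-1}$, while \ref{h1} gives $f(x,tz,\lambda)\le\|a_\lambda\|_\infty+c_1t^{r-1}\|z\|_\infty^{r-1}$. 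Since $r>p$, I first fix $t_0>0$ so small that $c_1t_0^{r-1}\|z\|_\infty^{r-1}\le\tfrac12 t_0^{p-1}$, and then, using $\|a_\lambda\|_\infty\to0$ as $\lambda\to0$, I pick $\lambda$ so small that $\|a_\lambda\|_\infty\le\tfrac12 t_0^{p-1}$; for such $\lambda$ the function $t_0z\in{\rm int}(\cs_+)$ is a supersolution of $(P_\lambda)$, and Lemma~\ref{ssp} yields a solution, whence $\lambda^*>0$. For \ref{exi2}, fix $\lambda\in(0,\lambda^*)$; by definition of the supremum in \eqref{str} there is $\mu\in(\lambda,\lambda^*]$ for which $(P_\mu)$ has a solution $u_\mu\in{\rm int}(\cs_+)$, and \ref{h6} gives $\fpl u_\mu=f(x,u_\mu,\mu)\ge f(x,u_\mu,\lambda)$, so $u_\mu$ is a supersolution of $(P_\lambda)$; Lemma~\ref{ssp} then produces $u_\lambda\in{\rm int}(\cs_+)$ with $u_\lambda\le u_\mu$. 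Taking $u_\lambda$ to be the minimal positive solution of $(P_\lambda)$ (whose existence follows from the downward directedness of the positive solution set, the minimum of two solutions being a supersolution from which one descends by Lemma~\ref{ssp}) renders $\lambda\mapsto u_\lambda$ nondecreasing.

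The crux is the bound $\lambda^*<\infty$ in \ref{exi1}, via Proposition~\ref{amp}, and I expect this to be the main obstacle. The plan is to show there is $\bar\lambda>0$ such that
\[f(x,t,\lambda)\ge\hat\lambda_1t^{p-1}\quad\text{for a.e. }x\in\Omega,\ \text{all }t\ge0\ \text{and all }\lambda\ge\bar\lambda,\]
with strict inequality for $t$ in a right-neighbourhood of $0$. This estimate is patched from three $t$-ranges: for $t\in[0,\delta']$ from the sublinear bound \ref{h4} (since $q<p$, one has $c_2t^{q-1}\ge\hat\lambda_1t^{p-1}$ for small $t$); for large $t$ from the superlinearity \ref{h2} combined with the monotonicity \ref{h6}; and for $t$ in the intermediate compact interval $[\delta',T]$ from \ref{h7}, which drives $f(x,t,\lambda)$ above $\hat\lambda_1T^{p-1}$ once $\lambda$ is large. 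I expect the delicate points to be the uniformity in $x$ when patching the three ranges and the verification of strictness near $0$.

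With this bound in hand, suppose $u\in{\rm int}(\cs_+)$ were a solution of $(P_\lambda)$ for some $\lambda\ge\bar\lambda$. Set $\beta:=f(\cdot,u,\lambda)-\hat\lambda_1u^{p-1}$; then $\beta\in L^\infty(\Omega)_+$ by the display above and by \ref{h1}, and $\beta\not\equiv0$ because $0<u<\delta'$ on a neighbourhood of $\partial\Omega$ (of positive measure), where the inequality is strict. Since $u\ge0$, the function $u$ solves $\fpl u=\hat\lambda_1|u|^{p-2}u+\beta$, so Proposition~\ref{amp} (with eigenvalue parameter $\hat\lambda_1$) forces $u^-\neq0$, contradicting $u>0$. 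Hence $(P_\lambda)$ has no solution for $\lambda\ge\bar\lambda$, and $\lambda^*\le\bar\lambda<\infty$.

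Finally, for \ref{exi3} I would upgrade the ordering $u_\lambda\le u_\mu$ (for $\lambda<\mu<\lambda^*$) to $u_\mu-u_\lambda\in{\rm int}(\cs_+)$ through Proposition~\ref{scp}. Set $T=\max\{\|u_\lambda\|_\infty,\|u_\mu\|_\infty\}$ and take $\sigma>0$ from \ref{h5} (with this $T$ and $\Lambda=\mu$), so that $t\mapsto f(x,t,\nu)+\sigma t^{p-1}$ is nondecreasing on $[0,T]$ for $\nu\in\{\lambda,\mu\}$; put $g(t)=\sigma(t^+)^{p-1}\in C^0(\R)\cap BV_{\rm loc}(\R)$. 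Monotonicity in $t$ together with $u_\lambda\le u_\mu$ gives $f(x,u_\lambda,\mu)+\sigma u_\lambda^{p-1}\le f(x,u_\mu,\mu)+\sigma u_\mu^{p-1}$, while \ref{h6} gives $f(x,u_\lambda,\lambda)\le f(x,u_\lambda,\mu)$; combining,
\[\fpl u_\lambda+g(u_\lambda)\le\fpl u_\mu+g(u_\mu)\le C,\]
the last inequality from \ref{h1} and $u_\mu\in L^\infty(\Omega)$. Moreover $u_\lambda\not\equiv u_\mu$, since equality would force $f(x,u_\lambda,\lambda)=f(x,u_\lambda,\mu)$ with $u_\lambda>0$, against the strict monotonicity \ref{h6}. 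Thus Proposition~\ref{scp} applies with $v=u_\lambda$ and $u=u_\mu$, yielding $u_\mu-u_\lambda\in{\rm int}(\cs_+)$.
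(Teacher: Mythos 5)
Your proposal follows essentially the same route as the paper: the torsion function scaled into a supersolution plus Lemma~\ref{ssp} for $\lambda^*>0$; the three-range patching of ${\bf H}$ \ref{h2}, \ref{h4}, \ref{h6}, \ref{h7} feeding the anti-maximum principle of Proposition~\ref{amp} for $\lambda^*<\infty$ (the uniform-in-$x$ patching you flag as delicate goes through exactly as you sketch, and in fact yields strictness for all $t>0$, not just near the origin); descent from $u_\mu$ via ${\bf H}$ \ref{h6} and Lemma~\ref{ssp} for \ref{exi2}; and Proposition~\ref{scp} for \ref{exi3}. The only divergence is your appeal to minimal solutions via downward directedness of the positive solution set, which you assert rather than prove (the minimum of two supersolutions being a supersolution is not among the paper's tools for the nonlocal operator) and which is not needed: the paper simply constructs, for each pair $\lambda<\mu$, ordered solutions $u_\lambda\le u_\mu$ by descending from $u_\mu$ with Lemma~\ref{ssp} before applying the strong comparison principle.
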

\begin{proof}
First we consider the auxiliary problem (torsion equation)
\beq\label{exi4}
\begin{cases}
\fpl w = 1 & \text{in $\Omega$} \\
w = 0 & \text{in $\Omega^c$.}
\end{cases}
\eeq
The corresponding energy functional $\Psi\in C^1(\w)$ is defined for all $w\in\w$ by
\[\Psi(w) = \frac{\|w\|^p}{p}-\int_\Omega w\,dx.\]
As in Section \ref{sec2} we see that $\Psi$ is coercive and sequentially weakly l.s.c., so there exists $w\in\w$ s.t.\
\[\Psi(w) = \inf_{\w}\Psi.\]
In particular, $w$ is a critical point of $\Phi$ and hence a solution of \eqref{exi4}, so by Proposition \ref{reg} we have $w\in C^\alpha_s(\overline\Omega)$. Testing \eqref{exi4} with $-w^-\in\w$ we get
\begin{align*}
\|w^-\|^p &\le \langle\fpl w,-w^-\rangle \\
&= \int_{\{w<0\}}w\,dx \le 0,
\end{align*}
so $w\ge 0$ in $\Omega$. Also, clearly $w\neq 0$. By Proposition \ref{smp}, then, we have $w\in{\rm int}(\cs_+)$.
\vskip2pt
\noindent
Now we prove \ref{exi1}. First, we claim that there exists $\check\lambda>0$ with the following property: for all $\lambda\in(0,\check\lambda)$ there is $\xi_\lambda>0$ s.t.\
\beq\label{exi5}
\|a_\lambda\|_\infty+c_1(\xi_\lambda\|w\|_\infty)^{r-1} < \xi_\lambda^{p-1}
\eeq
(with $a_\lambda\in L^\infty(\Omega)_+$, $c_1>0$ as in ${\bf H}$ \ref{h1}). Arguing by contradiction, let $(\lambda_n)$ be a sequence s.t.\ $\lambda_n\to 0^+$ and for all $n\in\N$, $\xi>0$
\[\|a_{\lambda_n}\|_\infty+c_1(\xi\|w\|_\infty)^{r-1} \ge \xi^{p-1}.\]
By ${\bf H}$ \ref{h1} we have $\|a_{\lambda_n}\|_\infty\to 0$, so passing to the limit as $n\to\infty$ we get for all $\xi>0$
\[c_1\|w\|_\infty^{r-1} > \xi^{p-r},\]
which yields a contradiction as $\xi\to 0^+$. We prove next that $\lambda^*\ge\check\lambda$. Indeed, for all $\lambda\in(0,\check\lambda)$ let $\xi_\lambda>0$ satisfy \eqref{exi5}, and set
\[\bar{u} = \xi_\lambda w \in {\rm int}(\cs_+).\]
By \eqref{exi4}, \eqref{exi5}, and ${\bf H}$ \ref{h1} we have weakly in $\Omega$
\[\fpl\bar{u} = \xi_\lambda^{p-1} \ge a_\lambda(x)+c_1\bar{u}^{r-1} \ge f(x,\bar{u},\lambda),\]
i.e., $\bar{u}$ is a (strict) supersolution of $(P_\lambda)$. By Lemma \ref{ssp} there exists a solution $u_\lambda\in{\rm int}(\cs_+)$ of $(P_\lambda)$ s.t.\ $u_\lambda\le\bar{u}$ in $\Omega$. Hence we have $\lambda^*\ge\lambda$. Taking the supremum over $\lambda$ we get as claimed
\[\lambda^* \ge \check\lambda > 0.\]
Looking on the opposite side, we claim that there exists $\hat\lambda>0$ s.t.\ for all $\lambda\ge\hat\lambda$ we have for a.e.\ $x\in\Omega$ and all $t>0$
\beq\label{exi6}
f(x,t,\lambda) > \hat\lambda_1 t^{p-1}
\eeq
(with $\hat\lambda_1>0$ as in Lemma \ref{pev}). Indeed, by ${\bf H}$ \ref{h2}, given $\lambda=1$ we can find $T_2>0$ s.t.\ for a.e.\ $x\in\Omega$ and all $t>T_2$
\[f(x,t,1) > \hat\lambda_1 t^{p-1}.\]
By ${\bf H}$ \ref{h6}, for all $\lambda\ge 1$, a.e.\ $x\in\Omega$, and all $t>T_2$ we have
\[f(x,t,\lambda) > \hat\lambda_1 t^{p-1}.\]
Besides, let $c_2,\delta>0$, $q\in(1,p)$ be as in ${\bf H}$ \ref{h4}. Then we can find $T_1>0$ s.t.\
\[T_1 < \min\{T_2,\,\delta\}, \ \frac{c_2}{T_1^{p-q}} > \hat\lambda_1.\]
Hence, for all $\lambda\ge 1$, a.e.\ $x\in\Omega$ and $t\in(0,T_1)$ we have by ${\bf H}$ \ref{h4}
\[f(x,t,\lambda) \ge c_2t^{q-1} > \frac{c_2}{T_1^{p-q}}t^{p-1} \ge \hat\lambda_1 t^{p-1}.\]
Finally, by ${\bf H}$ \ref{h7} we have uniformly for a.e.\ $x\in\Omega$ and all $t\in[T_1,T_2]$
\[\lim_{\lambda\to\infty} f(x,t,\lambda) = \infty,\]
so we can find $\hat\lambda\ge 1$ s.t.\ for all $\lambda\ge\hat\lambda$, a.e.\ $x\in\Omega$, and all $t\in[T_1,T_2]$
\[f(x,t,\lambda) > \hat\lambda_1 T_2^{p-1} \ge \hat\lambda_1 t^{p-1}.\]
Putting the inequalities above in a row, we get \eqref{exi6}. We see that $\lambda^*\le\hat\lambda$, arguing by contradiction. Let $\lambda>\hat\lambda$ be s.t.\ $(P_\lambda)$ has a solution $u_\lambda\in{\rm int}(\cs_+)$. Then by \eqref{exi6} we have weakly in $\Omega$
\[\fpl u_\lambda = f(x,u_\lambda,\lambda) > \hat\lambda_1 u_\lambda^{p-1}.\]
Set for all $x\in\Omega$
\[\beta(x) = f(x,u_\lambda(x),\lambda)-\hat\lambda_1 u_\lambda(x)^{p-1},\]
then by ${\bf H}$ \ref{h1} and the inequality above we have $\beta\in L^\infty(\Omega)_+$, $\beta\neq 0$. By Proposition \ref{amp} we have $u_\lambda^-\neq 0$, a contradiction. Thus we have
\[\lambda^* \le \hat\lambda < \infty.\]
Further, we prove \ref{exi2}. For all $\lambda\in(0,\lambda^*)$ we can find $\mu\in(\lambda,\lambda^*)$ s.t.\ $(P_\mu)$ has a solution $u_\mu\in{\rm int}(\cs_+)$. By ${\bf H}$ \ref{h6} we have weakly in $\Omega$
\[\fpl u_\mu = f(x,u_\mu,\mu) > f(x,u_\mu,\lambda),\]
i.e., $u_\mu$ is a (strict) supersolution of $(P_\lambda)$. By Lemma \ref{ssp} there exists a solution $u_\lambda\in{\rm int}(\cs_+)$ of $(P_\lambda)$ s.t.\ $u_\lambda\le u_\mu$ in $\Omega$.
\vskip2pt
\noindent
Finally, we prove \ref{exi3}. For all $0<\lambda<\mu<\lambda^*$, reasoning as above we find $u_\lambda,u_\mu\in{\rm int}(\cs_+)$ solutions of $(P_\lambda)$, $(P_\mu)$ respectively, s.t.\ $u_\lambda\le u_\mu$ in $\Omega$. Invoking ${\bf H}$ \ref{h5} (with $T=\|u_\mu\|_\infty$ and $\Lambda=\lambda^*$), we find $\sigma>0$ s.t.\ the mapping
\[t \mapsto f(x,t,\lambda)+\sigma t^{p-1}\]
is nondecreasing in $[0,\|u_\mu\|_\infty]$. So, using also ${\bf H}$ \ref{h6}, weakly in $\Omega$ we have
\begin{align*}
\fpl u_\lambda+\sigma u_\lambda^{p-1} &= f(x,u_\lambda,\lambda)+\sigma u_\lambda^{p-1} \\
&\le f(x,u_\mu,\lambda)+\sigma u_\mu^{p-1} \\
&< f(x,u_\mu,\mu)+\sigma u_\mu^{p-1} \\
&= \fpl u_\mu+\sigma u_\mu^{p-1}.
\end{align*}
By Proposition \ref{scp} (with $g(t)=\sigma(t^+)^{p-1}$) we have $u_\mu-u_\lambda\in{\rm int}(\cs_+)$.
\end{proof}

\noindent
In the next result we deal with the threshold case $\lambda=\lambda^*$:

\begin{lemma}\label{trs}
Let $\lambda^*>0$ be defined by \eqref{str}. Then $(P_{\lambda^*})$ has at least one solution $u^*\in{\rm int}(\cs_+)$.
\end{lemma}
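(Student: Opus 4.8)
The plan is to construct $u^*$ as the monotone limit of the solutions produced in Lemma \ref{exi}. Fix an increasing sequence $\lambda_n\uparrow\lambda^*$ and let $u_n=u_{\lambda_n}\in{\rm int}(\cs_+)$ be the corresponding solutions of $(P_{\lambda_n})$; by Lemma \ref{exi} \ref{exi3} these form a pointwise increasing sequence $0<u_1\le u_2\le\cdots$. Each $u_n$ arises from Lemma \ref{ssp} as a global minimizer of a truncated functional that agrees with $\Phi_{\lambda_n}$ below the truncation level, so $\Phi_{\lambda_n}(u_n)=\bar m_{\lambda_n}<0$, while testing the equation with $u_n$ gives $\langle\fpl u_n,u_n\rangle=\int_\Omega f(x,u_n,\lambda_n)u_n\,dx$. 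I would record the two facts that drive the argument: $\Phi_{\lambda_n}(u_n)<0$ (a uniform upper bound on the energy) and $\langle\Phi'_{\lambda_n}(u_n),u_n\rangle=0$.

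The \emph{main obstacle} is to prove that $(u_n)$ is bounded in $\w$, which is exactly where the Ambrosetti--Rabinowitz-type condition \ref{h3} and the exponent restriction $\rho>\tfrac{N}{ps}(r-p)$ are used. Subtracting the two identities above yields
\[\int_\Omega\big(f(x,u_n,\lambda_n)u_n-pF(x,u_n,\lambda_n)\big)\,dx=p\,\Phi_{\lambda_n}(u_n)<0.\]
Applying \ref{h3} with $\Lambda=\lambda^*$ (so a single $\theta>0$ works for every $\lambda_n\le\lambda^*$) together with the growth bound \ref{h1} on the bounded range, one gets a pointwise estimate $f(x,t,\lambda_n)t-pF(x,t,\lambda_n)\ge\theta t^\rho-C$ for all $t\ge0$, hence $\theta\|u_n\|_\rho^\rho-C<0$ and $(u_n)$ is bounded in $L^\rho(\Omega)$. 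Next, testing with $u_n$ and using \ref{h1} (with $\|a_{\lambda_n}\|_\infty$ uniformly bounded, since $\lambda\mapsto\|a_\lambda\|_\infty$ is locally bounded and $\lambda_n<\lambda^*<\infty$) gives $\|u_n\|^p\le C\|u_n\|_1+c_1\|u_n\|_r^r$. I would then interpolate $\|u_n\|_r$ between $\|u_n\|_\rho$ (bounded) and $\|u_n\|_{p^*_s}\le C\|u_n\|$ (continuous Sobolev embedding); the condition $\rho>\tfrac{N}{ps}(r-p)$ is precisely what forces the resulting exponent of $\|u_n\|$ to be strictly below $p$, so the term can be absorbed and $(u_n)$ is bounded in $\w$.

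With the bound in hand, up to a subsequence $u_n\rightharpoonup u^*$ in $\w$, $u_n\to u^*$ in $L^q(\Omega)$ for every $q<p^*_s$, and $u_n\to u^*$ a.e.\ (the convergence is in fact monotone). Testing the equation for $u_n$ against $u_n-u^*$, the right-hand side is controlled by $\|f(\cdot,u_n,\lambda_n)\|_{r'}\|u_n-u^*\|_r$, which tends to $0$ since $f(\cdot,u_n,\lambda_n)$ is bounded in $L^{r'}(\Omega)$ by \ref{h1} and $u_n\to u^*$ in $L^r(\Omega)$; hence $\limsup_n\langle\fpl u_n,u_n-u^*\rangle\le0$ and the $(S)_+$ property yields $u_n\to u^*$ strongly in $\w$. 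Passing to the limit in $\langle\fpl u_n,\varphi\rangle=\int_\Omega f(x,u_n,\lambda_n)\varphi\,dx$ for fixed $\varphi\in\w$, using continuity of $\fpl$ on the left and Carath\'eodory continuity of $f$ together with $\lambda_n\to\lambda^*$ and the $L^{r'}$-domination on the right, shows that $u^*$ solves $(P_{\lambda^*})$ weakly.

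It remains to place $u^*$ in the interior of the cone. Since $u^*\ge u_1>0$ we have $u^*\neq0$; by Proposition \ref{apb} and Proposition \ref{reg}, $u^*\in\cs$. Finally, invoking \ref{h5} with $T=\|u^*\|_\infty$ and $\Lambda=\lambda^*$ gives $\sigma>0$ such that $t\mapsto f(x,t,\lambda^*)+\sigma t^{p-1}$ is nondecreasing on $[0,\|u^*\|_\infty]$, whence weakly in $\Omega$
\[\fpl u^*+\sigma (u^*)^{p-1}=f(x,u^*,\lambda^*)+\sigma (u^*)^{p-1}\ge0.\]
Applying Proposition \ref{smp} with $g(t)=\sigma(t^+)^{p-1}$ then yields $u^*\in{\rm int}(\cs_+)$, so $u^*$ is a positive solution of $(P_{\lambda^*})$ and the lemma follows.
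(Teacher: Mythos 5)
Your proof is correct and follows essentially the same route as the paper: solutions with negative energy along $\lambda_n\uparrow\lambda^*$, the Ambrosetti--Rabinowitz-type bound from ${\bf H}$ \ref{h3} giving an $L^\rho$ bound, interpolation with $\tau r<p$ to get boundedness in $\w$, and the $(S)_+$ property to pass to the limit. The only (immaterial) difference is the final step: the paper deduces $u^*\in{\rm int}(\cs_+)$ directly from the pointwise monotonicity, since $u^*\ge u_n$ gives $\inf_\Omega u^*/\ds\ge\inf_\Omega u_n/\ds>0$, whereas you reapply ${\bf H}$ \ref{h5} together with Proposition \ref{smp}; both work.
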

\begin{proof}
Let $(\lambda_n)$ be an increasing sequence in $\R^+_0$ s.t.\ $\lambda_n \to\lambda^*$. Recalling the proof of Lemma \ref{exi} \ref{exi2}, we know that for all $n\in\N$ problem $(P_{\lambda_n})$ has a solution $u_n\in{\rm int}(\cs_+)$ with negative energy, i.e., weakly in $\Omega$
\beq\label{trs1}
\fpl u_n = f(x,u_n,\lambda_n)
\eeq
and
\beq\label{trs2}
\|u_n\|^p-p\int_\Omega F(x,u_n,\lambda_n)\,dx < 0.
\eeq
Also, by Lemma \ref{exi} \ref{exi3} we have $u_n<u_m$ in $\Omega$ for all $n<m$. Testing \eqref{trs1} with $u_n\in\w$ we get
\[\|u_n\|^p = \int_\Omega f(x,u_n,\lambda_n)u_n\,dx,\]
which along with \eqref{trs2} gives
\beq\label{trs3}
\int_\Omega\big(f(x,u_n,\lambda_n)u_n-pF(x,u_n,\lambda_n)\big)\,dx < 0.
\eeq
By ${\bf H}$ \ref{h3} (with $\Lambda=\lambda^*$) there exist $\theta,T>0$ s.t.\ for all $n\in\N$, a.e.\ $x\in\Omega$, and all $t\ge T$ we have
\[f(x,t,\lambda_n)t-pF(x,t,\lambda_n) \ge \theta t^\rho.\]
Also, by ${\bf H}$ \ref{h1} we can find $C>0$ s.t.\ for all $n\in\N$, a.e.\ $x\in\Omega$ and all $t\in[0,T]$
\[\big|f(x,t,\lambda_n)t-pF(x,t,\lambda_n)\big| \le C.\]
Summarizing, we have for all $n\in\N$, a.e.\ $x\in\Omega$, and all $t\ge 0$
\[f(x,t,\lambda_n)t-pF(x,t,\lambda_n) \ge \theta t^\rho-C,\]
with $\theta,C>0$ independent of $n$. Plugging the estimate above into \eqref{trs3} we get for all $n\in\N$
\[0 > \int_\Omega\big(\theta u_n^\rho-C\big)\,dx = \theta\|u_n\|_\rho^\rho-C.\]
So, $(u_n)$ is a bounded sequence in $L^\rho(\Omega)$. Since $\rho\le r<p^*_s$, we can find $\tau\in[0,1)$ s.t.\
\[\frac{1}{r} = \frac{1-\tau}{\rho}+\frac{\tau}{p^*_s}.\]
By the interpolation and Sobolev's inequalities, we have for all $n\in\N$
\[\|u_n\|_r \le \|u_n\|_\rho^{1-\tau}\|u_n\|_{p^*_s}^\tau \le C\|u_n\|^\tau.\]
A straightforward calculation leads from the bounds on $\rho$ in ${\bf H}$ \ref{h3} to $\tau r<p$. Now test \eqref{trs1} with $u_n\in\w$ again and use ${\bf H}$ \ref{h1} to get
\begin{align*}
\|u_n\|^p &= \int_\Omega f(x,u_n,\lambda_n)u_n\,dx \\
&\le \int_\Omega\big(a_{\lambda_n}(x)+c_1 u_n^{r-1}\big)u_n\,dx \\
&\le C\big(\|u_n\|_1+\|u_n\|_r^r\big) \\
&\le C\big(\|u_n\|+\|u_n\|^{\tau r}\big).
\end{align*}
So $(u_n)$ is bounded in $\w$. Passing to a subsequence if necessary, we find $u^*\in\w$ s.t.\ $u_n\rightharpoonup u^*$ in $\w$, $u_n\to u^*$ in $L^r(\Omega)$, and $u_n(x)\to u^*(x)$ for a.e.\ $x\in\Omega$. In particular, we have $u^*\ge 0$ in $\Omega$. Test now \eqref{trs1} with $(u_n-u^*)\in\w$, use ${\bf H}$ \ref{h1} and H\"older's inequality to get for all $n\in\N$
\begin{align*}
\langle\fpl u_n,u_n-u^*\rangle &= \int_\Omega f(x,u_n,\lambda_n)(u_n-u^*)\,dx \\
&\le C\int_\Omega (1+u_n^{r-1})(u_n-u^*)\,dx \\
&\le C(1+\|u_n\|_r^{r-1})\|u_n-u^*\|_r,
\end{align*}
and the latter tends to $0$ as $n\to\infty$. So we have
\[\limsup_n\langle\fpl u_n,u_n-u^*\rangle \le 0,\]
which by the $(S)_+$-property of $\fpl$ implies $u_n\to u^*$ in $\w$. So we can pass to the limit as $n\to\infty$ in \eqref{trs1} and see that weakly in $\Omega$
\[\fpl u^* = f(x,u^*,\lambda^*).\]
By Proposition \ref{reg} we have $u^*\in C^\alpha_s(\overline\Omega)$. Finally, since $(u_n)$ is pointwise increasing, we have
\[\inf_\Omega\frac{u^*}{\ds} \ge \inf_\Omega\frac{u_n}{\ds} > 0.\]
So, $u^*\in{\rm int}(\cs_+)$ is a solution of $(P_{\lambda^*})$.
\end{proof}

\noindent
Finally we prove that for any parameter below the threshold there exists a second solution. This is in fact a fairly technical step in our study, involving some typical variational tricks. In particular, we recall the following notion:

\begin{definition}\label{cer}
{\rm \cite[Definition 5.14]{MMP}} Let $(X,\|\cdot\|)$ be a Banach space, $\Phi\in C^1(X)$. $\Phi$ satisfies the Cerami $(C)$-condition if every sequence $(u_n)$ in $X$, s.t.\ $(\Phi(u_n))$ is bounded and $(1+\|u_n\|)\Phi'(u_n)\to 0$ in $X^*$, has a (strongly) convergent subsequence.
\end{definition}

\noindent
We can now prove our multiplicity result:

\begin{lemma}\label{mul}
Let $\lambda^*>0$ be defined by \eqref{str}. Then, for all $\lambda\in(0,\lambda^*)$ problem $(P_\lambda)$ has a solution $v_\lambda\in{\rm int}(\cs_+)$ s.t.\ $v_\lambda-u_\lambda\in{\rm int}(\cs_+)$.
\end{lemma}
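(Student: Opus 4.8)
The plan is to produce $v_\lambda$ as a second, larger critical point of a truncated energy, obtained by a mountain pass argument anchored at $u_\lambda$. Fix $\lambda\in(0,\lambda^*)$ and, using Lemma \ref{exi}, pick $\mu\in(\lambda,\lambda^*)$, so that $u_\lambda,u_\mu\in{\rm int}(\cs_+)$ solve $(P_\lambda),(P_\mu)$ and $u_\mu-u_\lambda\in{\rm int}(\cs_+)$; in particular $u_\mu$ is a (strict) supersolution of $(P_\lambda)$. I would truncate the reaction only from below at $u_\lambda$, setting $\hat f_\lambda(x,t)=f(x,u_\lambda(x),\lambda)$ for $t\le u_\lambda(x)$ and $\hat f_\lambda(x,t)=f(x,t,\lambda)$ for $t>u_\lambda(x)$, with primitive $\hat F_\lambda$ and energy $\hat\Phi_\lambda\in C^1(\w)$. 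Since $\hat f_\lambda$ satisfies ${\bf H}_0$ by \ref{h1}, this functional is well defined, and testing $\fpl u=\hat f_\lambda(x,u)$ with $(u_\lambda-u)^+$ together with the strict $(T)$-monotonicity of $\fpl$ shows that every critical point $u$ of $\hat\Phi_\lambda$ obeys $u\ge u_\lambda$ and solves $(P_\lambda)$. Thus it suffices to find a critical point of $\hat\Phi_\lambda$ distinct from $u_\lambda$.

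The first step is to show that $u_\lambda$ is a local minimizer of $\hat\Phi_\lambda$. I would introduce the further truncation from above at $u_\mu$, obtaining a functional $\Phi^0_\lambda$ which is coercive and sequentially weakly l.s.c.\ (its primitive has linear growth, as in Lemma \ref{ssp}), hence admits a global minimizer $\tilde u$. Comparison arguments, testing with $(u_\lambda-\tilde u)^+$ and $(\tilde u-u_\mu)^+$, give $u_\lambda\le\tilde u\le u_\mu$ and that $\tilde u$ solves $(P_\lambda)$. If $\tilde u\neq u_\lambda$ then $\tilde u$ is already the desired second solution and, by Proposition \ref{scp} with $g(t)=\sigma(t^+)^{p-1}$ from ${\bf H}$ \ref{h5}, $\tilde u-u_\lambda\in{\rm int}(\cs_+)$, so we are done. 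Otherwise $u_\lambda$ globally minimizes $\Phi^0_\lambda$; since $u_\mu-u_\lambda\in{\rm int}(\cs_+)$, any $u$ with $\|u-u_\lambda\|_{0,s}$ small enough satisfies $u\le u_\mu$, whence $\hat\Phi_\lambda(u)=\Phi^0_\lambda(u)\ge\Phi^0_\lambda(u_\lambda)=\hat\Phi_\lambda(u_\lambda)$. Hence $u_\lambda$ is a local minimizer of $\hat\Phi_\lambda$ in the $\cs$-topology, and Proposition \ref{svh} upgrades this to a local minimizer in $\w$.

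The second step is the mountain pass geometry and the Cerami condition. By ${\bf H}$ \ref{h2} the truncated primitive still satisfies $\hat F_\lambda(x,t)/t^p\to\infty$, whence $\hat\Phi_\lambda(te_1)\to-\infty$ as $t\to+\infty$, giving a point far from $u_\lambda$ with energy below $\hat\Phi_\lambda(u_\lambda)$. If $u_\lambda$ is not a strict local minimizer, a second critical point follows from the standard dichotomy for local minimizers; otherwise $\inf_{\|u-u_\lambda\|=r}\hat\Phi_\lambda>\hat\Phi_\lambda(u_\lambda)$ for small $r>0$, and the mountain pass theorem yields a critical point $v_\lambda$ at a level $>\hat\Phi_\lambda(u_\lambda)$, so $v_\lambda\neq u_\lambda$. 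For this I must verify that $\hat\Phi_\lambda$ satisfies the Cerami condition of Definition \ref{cer}, which I expect to be the main obstacle. The verification runs parallel to Lemma \ref{trs}: for a Cerami sequence $(u_n)$, combining the bound on $\hat\Phi_\lambda(u_n)$ with $\langle\hat\Phi_\lambda'(u_n),u_n\rangle\to 0$ produces a uniform bound on $\int_\Omega\big(\hat f_\lambda(x,u_n)u_n-p\hat F_\lambda(x,u_n)\big)\,dx$; by ${\bf H}$ \ref{h3} this bounds $(u_n)$ in $L^\rho(\Omega)$ (the contribution of $\{u_n\le u_\lambda\}$ being harmless, since $\hat f_\lambda$ is bounded there), and the interpolation estimate with $\tau r<p$ from Lemma \ref{trs} gives boundedness in $\w$; finally the $(S)_+$-property of $\fpl$ yields a strongly convergent subsequence.

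To conclude, in every case one obtains a critical point $v_\lambda$ of $\hat\Phi_\lambda$ with $v_\lambda\neq u_\lambda$; by the observation in the first paragraph $v_\lambda\ge u_\lambda$ solves $(P_\lambda)$, and Proposition \ref{reg} gives $v_\lambda\in C^\alpha_s(\overline\Omega)$. Applying Proposition \ref{smp} with $g(t)=\sigma(t^+)^{p-1}$ from ${\bf H}$ \ref{h5} (taking $T=\|v_\lambda\|_\infty$) yields $v_\lambda\in{\rm int}(\cs_+)$, and Proposition \ref{scp} with the same $g$, using also ${\bf H}$ \ref{h5} to make $t\mapsto\hat f_\lambda(x,t)+\sigma t^{p-1}$ nondecreasing on the relevant interval, finally gives $v_\lambda-u_\lambda\in{\rm int}(\cs_+)$, as required.
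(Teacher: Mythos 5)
Your proof is correct and follows essentially the same route as the paper's: truncate the reaction from below at $u_\lambda$, use a second truncation from above together with Proposition \ref{svh} to exhibit $u_\lambda$ as a local minimizer of $\hat\Phi_\lambda$ in $\w$, and then run the mountain pass theorem after checking the Cerami condition exactly as in Lemma \ref{trs}. The only (immaterial) difference is your choice of upper barrier: you take $u_\mu$ with $\mu\in(\lambda,\lambda^*)$ from Lemma \ref{exi}, while the paper uses the solution $u^*$ of $(P_{\lambda^*})$ from Lemma \ref{trs}; both are strict supersolutions dominating $u_\lambda$ with difference in ${\rm int}(\cs_+)$, so the argument goes through identically.
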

\begin{proof}
From Lemma \ref{trs} we know that $(P_{\lambda^*})$ has a solution $u^*\in{\rm int}(\cs_+)$. Now fix $\lambda\in(0,\lambda^*)$. By ${\bf H}$ \ref{h6} we have weakly in $\Omega$
\[\fpl u^* = f(x,u^*,\lambda^*) > f(x,u^*,\lambda),\]
so $u^*$ is a strict supersolution of $(P_\lambda)$. By Lemma \ref{ssp}, we see that $(P_\lambda)$ has a solution $u_\lambda\in{\rm int}(\cs_+)$ s.t.\ $u_\lambda\le u^*$ in $\Omega$ (without any loss of generality we may assume that such $u_\lambda$ is the same as in Lemma \ref{exi}). By ${\bf H}$ \ref{h5} (with $T=\|u^*\|_\infty$ and $\Lambda=\lambda^*$) there exists $\sigma>0$ s.t.\ for a.e.\ $x\in\Omega$ the mapping
\[t \mapsto f(x,t,\lambda)+\sigma t^{p-1}\]
is nondecreasing in $[0,\|u^*\|_\infty]$. By ${\bf H}$ \ref{h6} we have weakly in $\Omega$
\begin{align*}
\fpl u_\lambda+\sigma u_\lambda^{p-1} &= f(x,u_\lambda,\lambda)+\sigma u_\lambda^{p-1} \\
&\le f(x,u^*,\lambda)+\sigma (u^*)^{p-1} \\
&< f(x,u^*,\lambda^*)+\sigma (u^*)^{p-1} \\
&= \fpl u^*+\sigma(u^*)^{p-1}.
\end{align*}
By Proposition \ref{scp} (with $g(t)=\sigma (t^+)^{p-1}$) we have
\[u^*-u_\lambda \in {\rm int}(\cs_+).\]
Set for all $(x,t)\in\Omega\times\R$
\[\hat f_\lambda(x,t) = \begin{cases}
f(x,u_\lambda(x),\lambda) & \text{if $t\le u_\lambda(x)$} \\
f(x,t,\lambda) & \text{if $t>u_\lambda(x)$}
\end{cases}\]
and
\[\hat F_\lambda(x,t) = \int_0^t \hat f_\lambda(x,\tau)\,d\tau.\]
Also set for all $u\in\w$
\[\hat\Phi_\lambda(u) = \frac{\|u\|^p}{p}-\int_\Omega \hat F_\lambda(x,u)\,dx.\]
Clearly $\hat f_\lambda:\Omega\times\R\to\R$ satisfies ${\bf H}_0$, so $\hat\Phi_\lambda\in C^1(\w)$. The rest of the proof aims at showing the following claim:
\beq\label{mul1}
\text{$\hat\Phi_\lambda$ has a critical point $v_\lambda\in{\rm int}(\cs_+)$ s.t.\ $u_\lambda\le v_\lambda$ in $\Omega$, $u_\lambda\neq v_\lambda$.}
\eeq
We proceed by dichotomy. First, we introduce a new truncation of the reaction, setting for all $(x,t)\in\Omega\times\R$
\[\tilde f_\lambda(x,t) = \begin{cases}
f(x,u_\lambda(x),\lambda) & \text{if $t\le u_\lambda(x)$} \\
f(x,t,\lambda) & \text{if $u_\lambda(x)<t< u^*(x)$} \\
f(x,u^*(x),\lambda) & \text{if $t\ge u^*(x)$}
\end{cases}\]
and
\[\tilde F_\lambda(x,t) = \int_0^t \tilde f_\lambda(x,\tau)\,d\tau.\]
Since $u_\lambda,u^*\in{\rm int}(\cs_+)$, $\tilde f_\lambda:\Omega\times\R\to\R$ satisfies ${\bf H}_0$. Also, reasoning as in the proof of \eqref{ssp1} we see that for a.e.\ $x\in\Omega$ and all $t\in\R$ we have
\[|\tilde F_\lambda(x,t)| \le C(1+|t|).\]
Set for all $u\in\w$
\[\tilde\Phi_\lambda(u) = \frac{\|u\|^p}{p}-\int_\Omega \tilde F_\lambda(x,u)\,dx.\]
Then $\tilde\Phi_\lambda\in C^1(\w)$ is coercive and sequentially weakly l.s.c. So there exists $v_\lambda\in\w$ s.t.\
\beq\label{mul2}
\tilde\Phi_\lambda(v_\lambda) = \inf_{\w}\tilde\Phi_\lambda.
\eeq
In particular, we have weakly in $\Omega$
\beq\label{mul3}
\fpl v_\lambda = \tilde f_\lambda(x,v_\lambda).
\eeq
By Proposition \ref{reg} we have $v_\lambda\in C^\alpha_s(\overline\Omega)$. Testing $(P_\lambda)$ and \eqref{mul3} with $(u_\lambda-v_\lambda)^+\in\w$ we have
\[\langle\fpl u_\lambda-\fpl v_\lambda,(u_\lambda-v_\lambda)^+\rangle = \int_{\{u_\lambda>v_\lambda\}}\big(f(x,u_\lambda,\lambda)-\tilde f_\lambda(x,v_\lambda,\lambda)\big)(u_\lambda-v_\lambda)\,dx = 0,\]
so by strict $(T)$-monotonicity $u_\lambda\le v_\lambda$ in $\Omega$. As a consequence, $v_\lambda\in{\rm int}(\cs_+)$. Also, testing \eqref{mul3} and $(P_{\lambda^*})$ with $(v_\lambda-u^*)^+\in\w$, we get
\[\langle\fpl v_\lambda-\fpl u^*,(v_\lambda-u^*)^+\rangle = \int_{\{v_\lambda>u^*\}}\big(\tilde f_\lambda(x,v_\lambda)-f(x,u^*,\lambda^*)\big)(v_\lambda-u^*)\,dx,\]
and the latter is non-positive by $\lambda<\lambda^*$ and ${\bf H}$ \ref{h6}. So, as above $v_\lambda\le u^*$ in $\Omega$. Thus, in \eqref{mul3} we can replace $\tilde f_\lambda(x,v_\lambda)$ by $f(x,v_\lambda,\lambda)$ and see that $v_\lambda\ge u_\lambda$ is a critical point of $\hat\Phi_\lambda$.
\vskip2pt
\noindent
Now, either $v_\lambda\neq u_\lambda$, and then \eqref{mul1} is proved, or $v_\lambda=u_\lambda$, i.e., by \eqref{mul2} we have
\[\tilde\Phi_\lambda(u_\lambda) = \inf_{\w}\tilde\Phi_\lambda.\]
Set now
\[V = \big\{u^*-v:\,v\in{\rm int}(\cs_+)\big\},\]
an open set in the $\cs$-topology s.t.\ $u_\lambda\in V$. By construction, for all $u\in V$ we have
\[\hat\Phi_\lambda(u) = \tilde\Phi_\lambda(u) \ge \tilde\Phi_\lambda(u_\lambda) = \hat\Phi_\lambda(u_\lambda).\]
So, $u_\lambda$ is a local minimizer of $\hat\Phi_\lambda$ in $\cs$. By Proposition \ref{svh}, then, $u_\lambda$ is a local minimizer of $\hat\Phi_\lambda$ in $\w$ as well. Once again, an alternative shows: either there exists a critical point $v_\lambda\neq u_\lambda$ of $\hat\Phi_\lambda$, and as above we deduce $v_\lambda\ge u_\lambda$, hence \eqref{mul1} is proved; or $u_\lambda$ is a strict local minimizer of $\hat\Phi_\lambda$.
\vskip2pt
\noindent
We prove now that $u_\lambda$ is not a global minimizer of $\hat\Phi_\lambda$. Indeed, by ${\bf H}$ \ref{h2} and de l'H\^opital's rule we have uniformly for a.e.\ $x\in\Omega$
\[\lim_{t\to\infty}\frac{F(x,t,\lambda)}{t^p} = \infty.\]
Let $\hat\lambda_1>0$, $e_1\in{\rm int}(\cs_+)$ be defined as in Proposition \ref{pev}, and fix $\eps>0$. Then, we can find $T>0$ s.t.\ for a.e.\ $x\in\Omega$ and all $t\ge T$
\[F(x,t,\lambda) \ge \frac{\hat\lambda_1+\eps}{p}t^p.\]
By ${\bf H}$ \ref{h1} and construction of $\hat f_\lambda$, we can find $C>0$ s.t.\ for a.e.\ $x\in\Omega$ and all $t\ge 0$
\[\hat F_\lambda(x,t) \ge \frac{\hat\lambda_1+\eps}{p}t^p-C.\]
So, for all $\tau>0$ we have
\begin{align*}
\hat\Phi_\lambda(\tau e_1) &\le \frac{\tau^p\|e_1\|^p}{p}-\int_\Omega\Big(\frac{\hat\lambda_1+\eps}{p}(\tau e_1)^p-C\Big)\,dx \\
&\le \frac{\tau^p\hat\lambda_1}{p}-\frac{\tau^p(\hat\lambda_1+\eps)}{p}+C,
\end{align*}
and the latter tends to $-\infty$ as $\tau\to\infty$. So there exists $\tau>0$ s.t.\
\[\hat\Phi_\lambda(\tau e_1) < \hat\Phi_\lambda(u_\lambda).\]
In order to complete the geometrical picture, we deduce from the previous estimates that there exists $R\in(0,\|\tau e_1-u_\lambda\|)$ s.t.\
\[\inf_{\|u-u_\lambda\|=R}\hat\Phi_\lambda(u) = \eta \ge \hat\Phi_\lambda(u_\lambda) > \hat\Phi_\lambda(\tau e_1).\]
The next step consists in proving that $\hat\Phi_\lambda$ satisfies $(C)$ (see Definition \ref{cer} above). Let $(v_n)$ be a sequence in $\w$ s.t.\ $|\hat\Phi_\lambda(v_n)|\le C$ for all $n\in\N$, and $(1+\|v_n\|)\hat\Phi'_\lambda(v_n)\to 0$ in $W^{-s,p'}(\Omega)$ as $n\to\infty$. Then we have for all $n\in\N$
\[\Big|\|v_n\|^p-p\int_\Omega \hat F_\lambda(x,v_n)\,dx\Big| \le C,\]
and there exists a sequence $(\eps_n)$ s.t.\ $\eps_n\to 0^+$ and for all $n\in\N$, $\varphi\in\w$
\beq\label{mul4}
\Big|\langle\fpl v_n,\varphi\rangle-\int_\Omega \hat f_\lambda(x,v_n)\varphi\,dx\Big| \le \frac{\eps_n\|\varphi\|}{1+\|v_n\|}.
\eeq
Subtracting the inequalities above we get for all $n\in\N$
\beq\label{mul5}
\int_\Omega\big(\hat f_\lambda(x,v_n)v_n-p\hat F_\lambda(x,v_n)\big)\,dx \le C.
\eeq
By ${\bf H}$ \ref{h3} we can find $\theta,T>0$ s.t.\ for a.e.\ $x\in\Omega$ and all $t\ge T$
\[f(x,t,\lambda)t-pF(x,t,\lambda) \ge \theta t^\rho.\]
By ${\bf H}$ \ref{h1} and the construction of $\hat f_\lambda$, we can find $C>0$ s.t.\ for a.e.\ $x\in\Omega$ and all $t\ge 0$
\[\hat f_\lambda(x,t)t-p\hat F_\lambda(x,t) \ge \theta t^\rho-C.\]
Plugging such estimate into \eqref{mul5} we have for all $n\in\N$
\[\theta\|v_n\|_\rho^\rho \le \int_\Omega\big(\hat f_\lambda(x,v_n)v_n-p\hat F_\lambda(x,v_n)\big)\,dx+C \le C,\]
so $(v_n)$ is bounded in $L^\rho(\Omega)$. By the interpolation and Sobolev's inequalities, for all $n\in\N$ we have
\[\|v_n\|_r \le C\|v_n\|^\tau\]
for some $\tau\in[0,1)$ independent of $n\in\N$ s.t.\ $\tau r<p$ (see the proof of Lemma \ref{trs}). By \eqref{mul4} (with $\varphi=v_n$), ${\bf H}$ \ref{h1}, and H\"older's inequality, we have for all $n\in\N$
\begin{align*}
\|v_n\|^p &\le \int_\Omega \hat f_\lambda(x,v_n)v_n\,dx+\frac{\eps_n\|v_n\|}{1+\|v_n\|} \\
&\le C\int_\Omega (1+|v_n|^{r-1})|v_n|\,dx+\eps_n \\
&\le C\big(\|v_n\|_1+\|v_n\|_r^{\tau r}\big)+\eps_n.
\end{align*}
So $(v_n)$ is bounded in $\w$. Passing to a subsequence, we have $v_n\rightharpoonup v$ in $\w$ and $v_n\to v$ in both $L^1(\Omega)$ and $L^r(\Omega)$. Setting $\varphi=v_n-v$ in \eqref{mul4} and using ${\bf H}$ \ref{h1} and H\"older's inequality again, we have for all $n\in\N$
\begin{align*}
\langle\fpl v_n,v_n-v\rangle &\le \int_\Omega \hat f_\lambda(x,v_n)(v_n-v)\,dx+\frac{\eps\|v_n-v\|}{1+\|v_n\|} \\
&\le C\int_\Omega(1+|v_n|^{r-1})|v_n-v|\,dx+C\eps_n \\
&\le C\big(\|v_n-v\|_1+\|v_n\|_r^{r-1}\|v_n-v\|_r\big)+C\eps_n,
\end{align*}
and the latter tends to $0$ as $n\to\infty$. By the $(S)_+$-property of $\fpl$ we have $v_n\to v$ in $\w$, which proves $(C)$.
\vskip2pt
\noindent
We have now all the necessary ingredient to apply the mountain pass theorem (see for instance \cite[Theorem 5.40]{MMP}). Set
\[\Gamma = \big\{\gamma\in C([0,1],\w):\,\gamma(0)=u_\lambda,\,\gamma(1)=\tau e_1\big\}\]
and
\[c = \inf_{\gamma\in\Gamma}\max_{t\in[0,1]}\hat\Phi_\lambda(\gamma(t)).\]
Then $c\ge\eta$ and there exists a critical point $v_\lambda\in\w$ of $\hat\Phi_\lambda$ s.t.\ $\hat\Phi_\lambda(v_\lambda)=c$. Moreover, if $c=\eta$, then $\|v_\lambda-u_\lambda\|=R$. So $v_\lambda\neq u_\lambda$ satisfies weakly in $\Omega$
\[\fpl v_\lambda = \hat f_\lambda(x,v_\lambda).\]
As above we see that $v_\lambda\ge u_\lambda$, thus proving \eqref{mul1} in all cases.
\vskip2pt
\noindent
By construction of $\hat f_\lambda$, $v_\lambda$ solves $(P_\lambda)$, hence $v_\lambda\in C^\alpha_s(\overline\Omega)$. Also, from $v_\lambda\ge u_\lambda$ we deduce that $v_\lambda\in{\rm int}(\cs_+)$. By ${\bf H}$ \ref{h5} (with $T=\|v_\lambda\|_\infty$) there exists $\sigma>0$ s.t.\ for a.e.\ $x\in\Omega$ the mapping
\[t \mapsto f(x,t,\lambda)+\sigma t^{p-1}\]
is nondecreasing in $[0,\|v_\lambda\|_\infty]$. So we have weakly in $\Omega$
\begin{align*}
\fpl u_\lambda+\sigma u_\lambda^{p-1} &= f(x,u_\lambda,\lambda)+\sigma u_\lambda^{p-1} \\
&\le f(x,v_\lambda,\lambda)+\sigma v_\lambda^{p-1} \\
&= \fpl v_\lambda+\sigma v_\lambda^{p-1}.
\end{align*}
By Proposition \ref{scp} (with $g(t)=\sigma(t^+)^{p-1}$) we have $v_\lambda-u_\lambda\in{\rm int}(\cs_+)$.
\end{proof}

\noindent
We can now complete the proof of Theorem \ref{bif}:
\vskip2pt
\noindent
\begin{proof}
Let $\lambda^*>0$ be defined by \eqref{str}. By Lemmas \ref{exi} and \ref{mul}, for all $\lambda\in(0,\lambda^*)$ problem $(P_\lambda)$ has at least two solutions $u_\lambda,v_\lambda\in{\rm int}(\cs_+)$ s.t.\ $v_\lambda-u_\lambda\in{\rm int}(\cs_+)$, in particular $u_\lambda<v_\lambda$ in $\Omega$. Also, Lemma \ref{exi} \ref{exi3} says that for all $0<\lambda<\mu<\lambda^*$ we have $u_\mu-u_\lambda\in{\rm int}(\cs_+)$, in particular $u_\lambda<u_\mu$ in $\Omega$. This proves \ref{bif1}.
\vskip2pt
\noindent
By Lemma \ref{trs}, as $\lambda\to\lambda^*$ we have $u_\lambda\to u^*$, with $u^*\in{\rm int}(\cs_+)$ solution of $(P_{\lambda^*})$. This proves \ref{bif2}.
\vskip2pt
\noindent
Finally, by Lemma \ref{exi} \ref{exi1} we have $\lambda^*<\infty$, and by \eqref{str} for all $\lambda>\lambda^*$ there is no positive solution to $(P_\lambda)$. This proves \ref{bif3}.
\end{proof}

\begin{example}\label{ccr}
We collect here some functions $f:\Omega\times\R^+\times\R^+_0\to\R$ satisfying hypotheses ${\bf H}$ (as usual we assume $f(x,t,\lambda)=0$ for all $x\in\Omega$, $t<0$, and $\lambda>0$:
\begin{itemize}
\item[$(a)$] (non-autonomous concave-convex reaction) let $1<q<p<r<p^*_s$, $a,b\in L^\infty(\Omega)$ s.t.\ $a\ge a_0$, $b\ge b_0$ in $\Omega$ for some $a_0,b_0>0$, and set for all $(x,t,\lambda)\in\Omega\times\R^+\times\R^+_0$
\[f(x,t,\lambda) = \lambda a(x)t^{q-1}+b(x)t^{r-1};\]
\item[$(b)$] (autonomous reaction) let $1<q<p<r<(Np+p^2s)/N$, and set for all $(t,\lambda)\in\R^+\times\R^+_0$
\[f(t,\lambda) = \begin{cases}
\lambda t^{q-1} & \text{if $t\in[0,1]$} \\
\displaystyle \lambda t^{p-1}(\ln(t)+1) & \text{if $t>1$,}
\end{cases}\]
noting that $f$ does not satisfy the classical Ambrosetti-Rabinowitz condition.
\end{itemize}
\end{example}

\noindent
Notably, our approach also works in the case when $f(x,\cdot,\lambda)$ is asymptotically $(p-1)$-linear at the origin:

\begin{remark}\label{lin}
Assume that ${\bf H}$ holds, just replacing hypothesis ${\bf H}$ \ref{h4} with the following:
\begin{enumerate}[label=$(\roman*$)]
\setcounter{enumi}{3}
\item \label{h4lin} for all $\Lambda>0$ there exist $\hat\lambda_1<c_2\le c_3$ s.t.\ uniformly for a.e.\ $x\in\Omega$ and all $\lambda\in(0,\Lambda]$
\[c_2 \le \liminf_{t\to 0^+}\frac{f(x,t,\lambda)}{t^{p-1}} \le \limsup_{t\to 0^+}\frac{f(x,t,\lambda)}{t^{p-1}} \le c_3.\]
\end{enumerate}
Then, all the conclusions of Theorem \ref{bif} hold. Indeed, there are only two main steps at which the arguments for the present case differ from those seen above. The first is in the proof of Lemma \ref{ssp}, in proving that
\[\inf_{\w}\bar\Phi_\lambda < 0.\]
Indeed, fix $\eps>0$ s.t.\
\[\eps < c_2-\hat\lambda_1.\]
We can find $\delta>0$ s.t.\ for a.e.\ $x\in\Omega$ and all $t\in[0,\delta]$
\[f(x,t,\lambda) > (c_2-\eps)t^{p-1}.\]
Further, since $e_1\in{\rm int}(\cs_+)$, find $\tau>0$ s.t.\ in $\Omega$
\[0 < \tau e_1 \le \min\{\delta,\bar{u}\}.\]
By de l'H\^opital's rule, we have
\begin{align*}
\bar\Phi_\lambda(\tau e_1) &\le \frac{\tau^p\|e_1\|^p}{p}-\int_\Omega(c_2-\eps)\frac{(\tau e_1)^p}{p}\,dx \\
&\le \frac{\tau^p}{p}\big(\hat\lambda_1-c_2+\eps\big) < 0.
\end{align*}
A second difference appears in the proof of Lemma \ref{exi}, precisely in proving that $\lambda^*<\infty$. Using \ref{h4lin} in the place of ${\bf H}$ \ref{h4} we easily obtain \eqref{exi6}, and the rest follows as above.
\vskip2pt
\noindent
An example of (autonomous) reaction satisfying the modified hypotheses is the following: let $1<q<p<r<p^*$, $\eta>\hat\lambda_1$, and set for all $(t,\lambda)\in \R^+\times\R^+_0$
\[f(x,t,\lambda) = \begin{cases}
\lambda t^{r-1}+\eta t^{p-1} & \text{if $t\in[0,1]$} \\
\displaystyle \lambda t^{q-1}+\eta t^{r-1} & \text{if $t>1$.}
\end{cases}\]
\end{remark}

\vskip4pt
\noindent
{\bf Acknowledgement.} Both authors are members of GNAMPA (Gruppo Nazionale per l'Analisi Matematica, la Probabilit\`a e le loro Applicazioni) of INdAM (Istituto Nazionale di Alta Matematica 'Francesco Severi') and are supported by the research projects {\em Evolutive and Stationary Partial Differential Equations with a Focus on Biomathematics} and {\em Analysis of Partial Differential Equations in Connection with Real Phenomena}, funded by Fondazione di Sardegna (2019, 2021).


\begin{thebibliography}{99}

\bibitem{ABC}
{\sc A.\ Ambrosetti, H.\ Brezis, G.\ Cerami,}
Combined effects of concave and convex nonlinearities in some elliptic problems,
{\em J. Funct. Anal.} {\em 122} (1994) 519--543.

\bibitem{BM}
{\sc M.\ Bhakta, D.\ Mukherjee,}
Sign-changing solutions of $p$-fractional equations with concave-convex nonlinearities,
{\em Topol. Methods Nonlinear Anal.} {\bf 51} (2018) 511--544.

\bibitem{BCF}
{\sc C.\ Bjorland, L.\ Caffarelli, A.\ Figalli,}
Nonlocal tug-of-war and the infinity fractional Laplacian,
{\em Commun. Pure Appl. Math.} {\bf 65} (2012) 337--380.

\bibitem{CM}
{\sc G.\ Carboni, D.\ Mugnai,}
On some fractional equations with convex-concave and logistic-type nonlinearities,
{\em J. Differential Equations} {\bf 262} (2017) 2393--2413.

\bibitem{CMS}
{\sc W.\ Chen, S.\ Mosconi, M.\ Squassina,}
Nonlocal problems with critical Hardy nonlinearity,
{\em J. Funct. Anal.} {\bf 275} (2018) 3065--3114.

\bibitem{DHS}
{\sc A.\ Daoues, A.\ Hammami, K.\ Saoudi,}
Multiplicity results of a nonlocal problem involving concave-convex nonlinearities,
{\em Math. Notes} {\bf 109} (2021) 192--207.

\bibitem{DPV}
{\sc E.\ Di Nezza, G.\ Palatucci, E.\ Valdinoci},
Hitchhiker's guide to the fractional Sobolev spaces,
{\em Bull. Sci. Math.} {\bf 136} (2012) 521--573.

\bibitem{FP}
{\sc G.\ Franzina, G.\ Palatucci,}
Fractional $p$-eigenvalues,
{\em Riv. Math. Univ. Parma} {\bf 5} (2014) 373--386. 

\bibitem{FI}
{\sc S.\ Frassu, A.\ Iannizzotto,}
Extremal constant sign solutions and nodal solutions for the fractional $p$-Laplacian,
{\em J. Math. Anal. Appl.} {\bf 501} (2021) art.\ 124205.

\bibitem{FI1}
{\sc S.\ Frassu, A.\ Iannizzotto,}
Multiple solutions for the fractional $p$-Laplacian with jumping reactions,
{\em J. Fixed Point Theory Appl.} (to appear).

\bibitem{ILPS}
{\sc A.\ Iannizzotto, S.\ Liu, K.\ Perera, M.\ Squassina},  
Existence results for fractional $p$-Laplacian problems via Morse theory,
{\em Adv. Calc. Var.} {\bf 9} (2016) 101--125.

\bibitem{IMP}
{\sc A.\ Iannizzotto, S.\ Mosconi, N.S.\ Papageorgiou,}
On the logistic equation for the fractional $p$-Laplacian,
{\em Math. Nachr.} (to appear).

\bibitem{IMS}
{\sc A.\ Iannizzotto, S.\ Mosconi, M.\ Squassina},
Fine boundary regularity for the degenerate fractional $p$-Laplacian,
{\em J. Funct. Anal.} {\bf 279} (2020) art.\ 108659.

\bibitem{IMS1}
{\sc A.\ Iannizzotto, S.\ Mosconi, M.\ Squassina},
Sobolev versus H\"{o}lder minimizers for the degenerate fractional $p$-Laplacian,
{\em Nonlinear Anal.} {\bf 191} (2020) art.\ 111635.

\bibitem{IP}
{\sc A.\ Iannizzotto, N.S.\ Papageorgiou,}
Existence, nonexistence and multiplicity of positive solutions for parametric nonlinear elliptic equations,
{\em Osaka J. Math.} {\bf 51} (2014) 179--202.

\bibitem{K}
{\sc Y.H.\ Kim,}
Existence and multiplicity of solutions to a class of fractional $p$-Laplacian equations of Schr\"odinger type with concave-convex nonlinearities in $\R^N$,
{\em Mathematics (MDPI)} {\bf 2020} (2020) art.\ 1792.

\bibitem{LL}
{\sc E.\ Lindgren, P.\ Lindqvist,}
Fractional eigenvalues,
{\em Calc. Var. Partial Differential Equations} {\bf 49} (2014) 795--826.

\bibitem{LL1}
{\sc Q.\ Lou, H.\ Luo,}
Multiplicity and concentration of positive solutions for fractional $p$-Laplacian problem involving concave-convex nonlinearity,
{\em Nonlinear Anal. Real World Appl.} {\bf 42} (2018) 387--408.

\bibitem{MMP}
{\sc D.\ Motreanu, V.V.\ Motreanu, N.S.\ Papageorgiou,} 
Topological and variational methods with applications to nonlinear boundary value problems,
Springer, New York (2014).

\bibitem{P}
{\sc G.\ Palatucci,}
The Dirichlet problem for the $p$-fractional Laplace equation,
{\em Nonlinear Anal.} {\bf 177} (2018) 699--732.

\bibitem{PSY}
{\sc K.\ Perera, M.\ Squassina, Y.\ Yang,}
Bifurcation and multiplicity results for critical fractional $p$-Laplacian problems,
{\em Math. Nachr.} {\bf 289} (2016) 332--342.

\bibitem{W}
{\sc M.\ Warma,}
On a fractional $(s,p)$-Dirichlet-to-Neumann operator on bounded Lipschitz domains,
{\em J Elliptic Parabol. Equ.} {\bf 4} (2018) 223--269.

\bibitem{XZR}
{\sc M.\ Xiang, B.\ Zhang, V.D.\ R\u{a}dulescu,}
Existence of solutions for perturbed fractional $p$-Laplacian equations,
{\em J. Differential Equations} {\em 260} (2016) 1392--1413.

\bibitem{ZYY}
{\sc Z.\ Zhi, L.\ Yan, Z.\ Yang,}
Existence and multiplicity of solutions for a fractional $p$-Laplacian equation with perturbation,
{\em J. Inequalities Appl.} {\bf 2020} (2020) art.\ 97.

\end{thebibliography}
\end{document}